\definecolor{TUMblau}		{RGB}{0, 101, 189}
\definecolor{TUMgruen}		{RGB}{162, 173, 0}
\definecolor{TUMorange}		{RGB}{227, 114, 34}
\definecolor{TUMdunkelrot}	{RGB}{156,013,022}
\definecolor{gray}			{RGB}{140,140,140}
	\newlength\mywidth
\newcommand{\Reals}{\mathbb{R}}
\newcommand{\Complex}{\mathbb{C}}
\newcommand{\Htwo}{\mathcal{H}_2}
\renewcommand{\th}{^{\text{th}}}
\newcommand{\defeq}{\vcentcolon=}
\newcommand{\logicand}{\wedge}
\newcommand{\trans}[1]{#1^\top}
\newcommand{\inv}[1]{#1^{-1}}
\newcommand{\invt}[1]{#1^{-\top}}
\newcommand{\pd}{\succ}
\newcommand{\nd}{\prec}
	\newcommand\vecspace[1]{\mathcal{#1}}
\DeclareMathOperator{\diagmat}{diag}
	\newcommand\diag[1]{\diagmat\left(#1\right)}
\DeclareMathOperator{\matrixrank}{rank}
	\newcommand\rank[1]{\matrixrank #1}
\newcommand{\norm}[1]{\left\lVert#1\right\rVert}
\newcommand{\diff}[2]{\frac{\mathrm{d}\, #1}{\mathrm{d}\, #2}}
\newcommand{\innProd}[2]{\left\langle#1, \; #2\right\rangle}
\newcommand{\ts}{\!} 
\newcommand{\krylov}[3]{\vecspace{K}_{#1}\left(#2, #3\right)}
\newtheorem{thm}{Theorem}[section]
\newtheorem{proposition}{Proposition}
\newtheorem{lemma}{Lemma}
\newtheorem{corollary}{Corollary}
\newtheorem{definition}{Definition}
\journal{arXiv}
\begin{document}
\begin{frontmatter}

\title{Stability-Preserving, Adaptive Model Order Reduction of DAEs by Krylov-Subspace Methods\tnoteref{t1}}
\tnotetext[t1]{The work related to this contribution is supported by the German Research Foundation (DFG), Grant LO408/19-1.}

\author[RT]{Alessandro Castagnotto \corref{corr1}}\cortext[corr1]{Corresponding author} \ead{a.castagnotto@tum.de}
\author[RT]{Heiko K. F. Panzer}
\author[Mathe]{Klaus-Dieter Reinsch}
\ead{kladire@ma.tum.de}
\author[RT]{Boris Lohmann}


\address[RT]{Chair of Automatic Control, Boltzmannstr. 15}
\address[Mathe]{Chair of Numerical Mathematics, Boltzmannstr. 3}
\address{Technische Universit\"at M\"unchen, D-85748 Garchig, Germany}

\begin{abstract}
Systems of differential-algebraic equations (DAEs) represent a widespread formalism in the modeling of constrained mechanical systems and electrical networks. Due to the automatic, object-oriented generation of the equations of motion and the resulting redundancies in the descriptor variables, DAE systems often reach a very high order. This motivates the use of model order reduction (MOR) techniques that capture the relevant input-output dynamics in a reduced model of much smaller order, while satisfying the constraints and preserving fundamental properties. 
Due to their particular structure, new MOR techniques designed to work directly on the DAE are required that reduce the dynamical part while preserving the algebraic. 
In this contribution, we exploit the specific structure of index-1 systems in semi-explicit form and present two different methods for stability-preserving MOR of DAEs. The first technique preserves strictly dissipativity of the underlying dynamics, the second takes advantage of $\Htwo$-pseudo-optimal reduction and further allows for an adaptive selection of reduction parameters such as reduced order and Krylov shifts.
\end{abstract}

\begin{keyword}
Model reduction \sep differential-algebraic equations \sep large-scale systems \sep Krylov-subspace methods \sep power systems

\MSC[2010] 15A22 \sep 
34C20 \sep 
49M99 \sep 
65F99 \sep 
78M34 \sep 
93A15 \sep 
93C05 \sep 
93C15. 

\end{keyword}

\end{frontmatter}
\newcommand{\Gprop}{G_f(s)}
\newcommand{\Gimp}{G_\infty(s)}
\newcommand{\Pencil}{P_{A,E}(\lambda)}
\newcommand{\Dr}{D_{imp}}
\newcommand{\rtan}{r}
\newcommand{\ltan}{l}
\newcommand{\Rtan}{R}
\newcommand{\Ltan}{L}
\DeclareDocumentCommand{\Grt}{o o}{%
	{ %
		\IfNoValueTF {#1} %
		{\widetilde{G}_{r}(s)}%
		{ %
			\IfNoValueTF{#2} %
			{\widetilde{G}_{r,#1}(s)} %
			{\widetilde{G}_{r,#1}^{#2}(s)} %
		}
	}%
}

\section{Introduction}\label{sec:Intro}
The accurate modeling of dynamical systems such as flexible mechanical structures, electrical circuits with large-scale integration and interconnected power networks often results in mathematical models of very high dimension, meaning that the number of state variables and equations used to represent the dynamic behavior grows to the order of tens of thousands up to millions. 

This \emph{curse of dimensionality} is even more accentuated if the formalism used to model the system yields a set of \emph{differential-algebraic equations} (DAEs). 
This type of formulation arises e.g. if the system is modeled by equating the fundamental physics of each individual element, which is then linked to its neighbors through appropriate interconnections. This approach has the fundamental advantage of being quite generic, making it therefore suited for automated, object-oriented computerized modeling. 
DAEs represent systems of equations that are composed of a dynamical part, describing how the state can vary in time, and an algebraic part, describing static relationships and constraints amongst state variables. 
Subsequently, within this formalism the states used to model the system do not represent minimal coordinates needed to describe the dynamics but generally include a set of algebraic variables used for expressing a \emph{constraint} manifold on which the state of the system has to evolve at all times.
Due to this additional algebraic variables, the number of the states used to model the system becomes even larger.

Especially in a large-scale setting, it is not computationally easy---if at all possible---to separate the dynamic states from the algebraic ones. This implies that the DAE has to be analyzed and treated directly, motivating the flourishing theory on DAEs that has been developed in the past decades \cite{Dai_1989, Kunkel_2006, Schoeps_2014}.

In the following contribution, we shall focus on the simpler, yet common case of \emph{linear, constant coefficients DAEs} given as a generalized state-space system of the form
\begin{equation}
\begin{aligned}
E \,\dot{x} &= A\, x + B\, u \\
y &= C\, x + D\, u
\end{aligned}
\label{eq:DAE}
\end{equation}
where $E\ts\in\ts\Reals^{N \ts\times\ts N}$ is the singular \emph{descriptor matrix}, $A\ts\in\ts\Reals^{N\ts\times\ts N}$ is the system matrix and $x\ts\in\ts\Reals^N$, $u\ts\in\ts\Reals^m$, $y\ts\in\ts\Reals^p$ ($p,m\ts\ll\ts N$) represent the state, input and output of the system respectively. Whenever the matrix $E$ is regular, we will refer to \eqref{eq:DAE} as a system of ordinary differential equations (ODE) in implicit form.

If the model in \eqref{eq:DAE} is used for simulation, optimization or control synthesis, a high order $N$ might pose severe limitations in terms of computation time and even storage capabilities.
For this reason, model order reduction (MOR) techniques are implemented to obtain a significantly smaller reduced order model (ROM) that captures the dominant input-output behavior of the full order model (FOM) while preserving characteristic properties.

A common approach to the reduction of state-space models like in \eqref{eq:DAE} is given by the \emph{Petrov-Galerkin projection} $\Pi^2  = \Pi =  E\,V\inv{\left(\trans{W}E\,V\right)}\trans{W}$ and yields
\begin{equation}
\begin{aligned}
\overbrace{\trans{W}E\,V}^{E_r} \, \dot{x}_r\, &= \, \overbrace{\trans{W}A\,V}^{A_r}\, x_r \, +  \, \overbrace{\trans{W} B}^{B_r} \, u\\ 
y_r \, &= \; \underbrace{C\,V}_{C_r}\, x_r \,+ \; \underbrace{D}_{D_r} \, u
\end{aligned}\label{eq:ROM}		 
\end{equation}
where $x_r\ts\in\ts\Reals^{n}$ $(n\ts\ll\ts N)$ represents the reduced state vector. 
Accordingly, the aim of MOR techniques lies entirely in the appropriate choice and design of projection matrices $V, W$.

MOR theory for linear state-space systems satisfying $\det E\ts\neq\ts0$ has been extensively studied over the past decades and includes established techniques such as \emph{modal reduction}, \emph{balanced truncation} and \emph{moment matching} \cite{Antoulas_Book}. The latter, also known as \emph{Krylov-subspace methods}, \emph{implicit moment matching} or \emph{interpolatory reduction} techniques, stand out due to their generality and low computational cost,  making them a predestined candidate for the reduction of truly large-scale systems.
However, these reduced computations reveal only local information about the system and global properties of the model might be lost.
For instance, one challenge arising when applying moment matching is that, given a stable FOM, stability of the ROM is not guaranteed per se. 
While there are some approaches to preserve stability in the case of ODEs, stability preservation in the more general case of DAEs has---to the authors' knowledge---not been addressed so far.

In this contribution this problem is considered. Two different techniques for stability preservation in interpolatory reduction are presented. Both techniques apply to index-1 DAEs in semi-explicit form and do not require the explicit computation of the underlying ordinary differential equation. Further it will be shown in theory and through numerical examples how an orthogonal projection ($V\ts=\ts W$) of such DAEs can be applied correctly only if some conditions are met. Finally, we will explain how to adaptively choose reduced order and interpolation points of the Krylov subspaces for this class of systems.

The sequel of this contribution is outlined as follows: section \ref{sec:Preliminaries} revises the existing theory on model reduction, stability preservation and DAEs. 
In section \ref{sec:StabPrev1-orthogonal} a first stability preserving reduction technique is presented. This requires the extension of the concept of \emph{strictly dissipative} realization of a state-space system to the more general case of DAEs and shows how to exploit this property for stability preservation. The correct reduction of DAEs by orthogonal projection will also be discussed.
Section \ref{sec:Stability_Preserving_H2_pseudo} presents the second stability preserving technique based on $\Htwo$-pseudo-optimal reduction. For this purposes, the \emph{Pseudo-Optimal Rational Krylov} (PORK) algorithm will be adapted. Further, the adaptive choice of reduced order and interpolation points will also be generalized.
Finally, the theoretical results are validated through numerical examples in section \ref{sec:examples}.

\section{Preliminaries and problem statement}\label{sec:Preliminaries}
\subsection{Model reduction of ODEs by Krylov-subspace methods}
As introduced in section \ref{sec:Intro}, the computation of a reduced model by means of projection boils down to finding suitable projection matrices $V, W$ (cf. equation \eqref{eq:ROM}). Depending on which properties of the original model we would like to preserve, several methods of determining the projection subspace have been studied in the literature. 
For instance, \emph{modal reduction} is aimed at preserving dominant eigenmodes of the system \cite{Bonvin_1982_Modal}, while \emph{balanced truncation} preserves dominant directions in the system that are equally controllable and observable \cite{Antoulas_Book}. 
Due to their generality and low computational cost, \emph{Krylov-subspace} methods have been studied intensively over the past decades \cite{Grimme_PhD,Gallivan_2004_MIMO,Beattie_2014_Survey} and shall be introduced briefly in the following.

If the matrices $V,W$ in \eqref{eq:ROM} span $n\th$-order \emph{tangential input} and/or \emph{output Krylov subspaces}%
\begin{subequations}%
	\begin{align} %
	&\krylov{n}{\inv{(A-s_0E)}E}{\inv{(A-s_0E)}B\;\rtan} \label{eq:InputKrylov}\\
	&\krylov{n}{\invt{(A-s_0E)}\trans{E}}{\invt{(A-s_0E)}\trans{C}\;\ltan}\label{eq:OutputKrylov}
	\end{align}\label{eq:KrylovSubspaces}%
\end{subequations}%
respectively for some complex \emph{shifts} $s_0\in \Complex$ and \emph{tangential directions} $\rtan\in \Reals^{m}$, $\ltan\in \Reals^{p}$, then the reduced model matches some coefficients of the Taylor series expansion of the original transfer function around $s_0$.
In the case of systems with more than one input or output ($m,p\ts>\ts 1$) the Taylor coefficients are not scalar and the matching applies only to the linear combination defined by the tangential directions $\rtan$ and $\ltan$.
The computation of such matrices requires only the solution of (typically sparse) linear systems of equations, which makes it particularly attractive in a large-scale setting. In this framework, the task of computing good reduced models translates to finding suitable shifts and tangential directions.

For theoretical considerations it is worth noting that, at least in the ODE case, there is a duality between the Krylov subspaces in \eqref{eq:KrylovSubspaces} and the \emph{sparse-dense Sylvester equations}%
\begin{subequations}%
	\begin{align}
	&A\,V - E\,V S_V - B \,\Rtan = 0 \label{eq:V-Sylvester}\\
	&\trans{W}A -S_W \,\trans{W}E -\Ltan\,C = 0 \label{eq:W-Sylvester}
	\end{align}\label{eq:Sylvester}%
\end{subequations}%
respectively, where the pairs $(S_V, \Rtan)$ and $(S_W, \Ltan)$ encode the interpolation data (shifts and tangential directions) \cite{Gallivan_2004_Sylvester,Wolf_PhD}.

While reduction methods like modal techniques and balanced truncation intrinsically preserve stability, this is generally not the case for Krylov-subspace methods. This can be justified by the fact that while the former methods require costly computations that reveal the structure of the system (eigenvectors or Gramians respectively), the latter method limits the effort to the computation of only a few directions that guarantee matching of the transfer behavior at certain frequencies.
It is therefore highly desirable to include some strategy to ensure stability of the ROM even for interpolatory model reduction nonetheless.

\subsection{Stability preservation in MOR of ODEs}\label{sec:ODE_Stability}
A few results are available for the stability preservation in interpolatory reduction of ODEs and shall be revised in the following.

\subsubsection{Strictly-dissipative realizations}\label{sec:SD}
A state-space system as in \eqref{eq:DAE} satisfying $\det\ts E\ts\neq\ts 0$ is said to be in a \emph{strictly dissipative} form if and only if $E\ts=\ts\trans{E}\ts\pd\ts0$ and $A\ts+\ts\trans{A}\ts\nd\ts0$. Since the trajectory of such a system can be shown to be  strictly decreasing with respect to the elliptic 2-norm induced by $E$ \cite{Panzer_PhD}, i.e.
\[
\norm{x(t)}_{2,E}^2 \defeq \trans{x}(t)\, E\, x(t),
\]
systems in strictly dissipative form are asymptotically stable. 
In other terms, $E\ts=\ts\trans{E}\ts\pd\ts0$ and $A\ts+\ts\trans{A}\ts\nd\ts0$ implies that all generalized eigenvalues of the pair ($A$, $E$) have strictly negative real part.
This fact can be exploited in a projective MOR setting as in \eqref{eq:ROM} by performing orthogonal projection with a full-column-ranked matrix $V\ts=\ts W$. This type of (Galerkin) projection preserves definiteness of the original matrices and it is therefore easy to show that the reduced model in \eqref{eq:ROM} will preserve strict dissipativity and hence be stable \cite{Silveira_1999}.
Please note that any asymptotically stable ODE can be transformed to a strictly dissipative realization and that systems in \emph{second order} structure can often be directly transformed into an equivalent strictly dissipative first order realization \cite{Panzer_PhD}.

\subsubsection{$\Htwo$-pseudo-optimality and CUREd SPARK}\label{sec:H2-opt}
Recently, interpolatory techniques for ODEs that adaptively choose the Krylov shifts and guarantee stability preservation at convergence have been proposed \cite{Beattie_2009_TrustRegion, Panzer_2013_ACC, Panzer_PhD}.
In this contribution, the focus will lie on the \emph{Stability-Preserving, Adaptive Rational Krylov} (SPARK) Algorithm introduced by Panzer et al. in \cite{Panzer_2013_ACC} and further developed in \cite{Panzer_PhD} due to its suitability in the reduction of DAEs.
The stability preservation in SPARK is guaranteed by construction, exploiting the concept of $\Htwo$-pseudo-optimality \cite{Wolf_2013_ECC, Wolf_PhD}. Given the desired interpolation data encoded in $(S_V, \Rtan)$ (cf. equation \eqref{eq:Sylvester}), it is possible to directly construct a reduced model which satisfies the interpolatory conditions and shares the same spectrum as $(-S_V)$. 
Based on the Sylvester equation \eqref{eq:Sylvester}, the Pseudo-Optimal Rational Krylov (PORK) algorithm yields the desired pseudo-optimal reduced model \cite{Wolf_2013_ECC}.

\begin{algorithm}[!ht]\caption{Pseudo-Optimal Rational Krylov (PORK)}\label{alg:pork}
	\begin{algorithmic}[1]
		\Require ($E$, $A$, $B$, $C$, $D$), $(S_V, \Rtan)$
		\Ensure $\mathcal{H}_2$ pseudo-optimal reduced system matrices
		\State {$V \gets \ A V - E V S_V - B \Rtan = 0$}
		\hfill{// Krylov subspace}
		\State {$\inv{P_r} =$ lyap$(-\trans{S_V}, \trans{\Rtan}\Rtan)$} 	\hfill{// Low-dimensional Lyapunov eq.}
		\State {$B_r = -P_r \trans{\Rtan}$}
		\State {$A_r = S_V + B_r \Rtan$, $\;E_r = I$, $\;C_r = CV$, $\;D_r = D$}
	\end{algorithmic}
\end{algorithm}

A dual version of the algorithm based on the output Krylov Sylvester equation \eqref{eq:W-Sylvester} is available and analogous.
The reason why such a reduced model is called  $\Htwo$-pseudo-optimal is that it is the global optimum, in terms of the $\Htwo$ norm, amongst all reduced models with the same spectrum.
Therefore, the PORK algorithm assigns the eigenvalues of the reduced model, resulting from a projection with a Krylov subspace, according to the choice of shifts. It is therefore evident that selecting shifts on the right complex half-plane yields a stable, pseudo-optimal ROM by construction.
However, note that in this setting, the choice of appropriate shifts becomes \emph{twice} as important: not only do they determine at which complex frequencies interpolation is achieved, but they also explicitly determine the eigenvalues of the reduced model.
For this reason, pseudo-optimal reduction has been introduced concurrently to an adaptive reduction framework by Panzer et al. in \cite{Panzer_2013_ACC}, in which the choice of shifts for pseudo-optimal reduction is conducted adaptively---through a greedy algorithm---within a cumulative reduction framework. A complete and thorough exposition of the proposed reduction strategy, known as \emph{CUREd SPARK}, can be found in the recent dissertations  of Panzer \cite{Panzer_PhD} and Wolf \cite{Wolf_PhD} and shall be revised briefly in the following.

Cumulative reduction (CURE) is based on a factorization of the error system that allows an iterative reduction of the error term and therefore an adaptive choice of reduced order. In fact, if the reduced model, denoted by its transfer function $G_r(s)$, results from a projection with an input Krylov subspace $V$, then the error can be factorized as
\begin{equation}
G_e(s) =
\underbrace{
	\left[\begin{array}{c|c}
	E,A & B\\ \hline C & D
	\end{array}\right]
}_{G(s)}
-
\underbrace{
	\left[\begin{array}{c|c}
	E_r,A_r & B_r\\ \hline C_r & D
	\end{array}\right]
}_{G_r(s)} 
= 
\underbrace{
	\left[\begin{array}{c|c}
	E,A & B_\bot\\ \hline C & 0
	\end{array}\right]
}_{G_\bot(s)}
\cdot
\underbrace{
	\left[\begin{array}{c|c}
	E_r,A_r & B_r\\ \hline \Rtan & I
	\end{array}\right]
}_{\Grt}.
\label{eq:ErrorFactorization}
\end{equation} %
The first factor $G_\bot(s)$ represents a high-dimensional system that resembles the original one but is excited only by the input directions not considered during the previous projection by $\Pi$: $B_\bot \defeq (I-\Pi)B = B - EV\inv{E_r}B_r$.
The second factor $\Grt$ is low-dimensional and resembles the reduced model except for having a unity feedthrough and the output matrix corresponding to the matrix of right tangential directions $\Rtan$.

Using this error factorization, a cumulative reduction procedure is derived which reduces, at each iteration $k$, only the high-dimensional part  $G_{\bot,k}$ of the new error and cumulates all ROMs into a total reduced model $G_{r,k}^{\Sigma}$:
\begin{equation} \label{eq:CURE}
\begin{aligned}
G(s) &= G_{r,1}(s) + \underbrace{\left(G_{\bot,1}(s)\cdot \Grt[1] \right)}_{G_{e,1}(s)}\\
&= G_{r,1}(s) + \left(G_{r,2}(s) +G_{\bot,2}\cdot\Grt[2]\right)\cdot \Grt[1]\\
&\mathrel{\makebox[\widthof{=}]{\vdots}}\\
&= G_{r,k}^{\Sigma}(s) + G_{\bot,k}(s)\cdot \Grt[k][\Sigma].
\end{aligned}
\end{equation}
A detailed description of the procedure including all expressions, the computation of the cumulated system matrices as well as the dual factorization using the output Krylov subspace $W$	are given in \cite[pp.57-70]{Panzer_PhD}.

Within CURE, there is no restriction of the reduced order at each iteration $k$. One possible choice is to reduce the $G_{\bot,k}(s)$ system at each iteration to a reduced model $G_{r,k}(s)$ of order two. This idea is exploited by the Stability-Preserving Adaptive Rational Krylov (SPARK) algorithm, which is aimed at finding a local $\Htwo$-optimal reduced model of order two while reducing the search space to the set of all $\Htwo$-pseudo-optimal, stable ROMs. Note that this restriction is valid since $\Htwo$-pseudo-optimality is a necessary condition for $\Htwo$-optimality, hence the set of all $\Htwo$-optimal ROMs is entirely contained in the set of all $\Htwo$-pseudo-optimal ROMs. This restriction of the search space has---amongst other---the advantage of simplifying the cost function for optimization. In fact, if $G_r(s)$ is an $\Htwo$-pseudo-optimal interpolant of $G(s)$, then the error norm simplifies to \cite[p.92]{Wolf_PhD}:
\begin{equation} \label{eq:H2errorPO}
\begin{aligned}
\norm{G_e(s)}_{\Htwo}^2 &\mathrel{\makebox[\widthof{$\overset{\Htwo-po}{=}$}]{=}} \innProd{G_e(s)}{G_e(s)}_{\Htwo}\\
&\mathrel{\makebox[\widthof{$\overset{\Htwo-po}{=}$}]{=}} \norm{G(s)}_{\Htwo}^2 - 2\innProd{G(s)}{G_r(s)}_{\Htwo} + \norm{G_r(s)}_{\Htwo}^2 \\
&\overset{\Htwo-po}{=} \norm{G(s)}_{\Htwo}^2 - \norm{G_r(s)}_{\Htwo}^2.
\end{aligned}
\end{equation}
Since the first term in the resulting expression is constant, the minimization of the error norm is equivalent to the maximization of the norm of the reduced model, a cost function that is cheap to compute! Restricting the search to $\Htwo$-pseudo-optimal, stable reduced order models of order two yields simple expressions for gradient and Hessian, parametrized by two real numbers. Optimal values can be found by optimization, for example through a trust-region descent method \cite[pp.75-82]{Panzer_PhD}.
Cumulation of all \mbox{$\Htwo$-optimal} reduced models of order two yields the final ROM of the desired order.

\subsection{Differential-algebraic equations}\label{sec:DAE}
This section revises the main properties of linear, constant coefficient DAEs and introduces the special case of index-1 DAEs in semi-explicit form (SE-DAEs).


A DAE as in \eqref{eq:DAE} is primarily characterized by the matrix pair $(A,E)$ defining the \emph{matrix pencil} $\Pencil \ts\defeq\ts A \ts+\ts\lambda E$. Provided that the pair $(A,E)$ has a finite number of generalized eigenvalues, i.e. $\det \Pencil\ts\not\equiv\ts 0$, the pencil is called regular, which is often the case if the algebraic constraints are not redundant. In this case the generalized eigenvalues can be divided into those that take a finite value and those at infinity. In terms of the dynamic system in \eqref{eq:DAE}, the former correspond to the dynamic modes whereas the latter correspond to the kernel of $E$, hence the algebraic part. Regularity of the pencil also implies that the DAE has a unique solution for a sufficiently smooth input and admissible initial conditions, and that there exist two invertible matrices $Q_l,Q_r$ that bring the system to the so called \emph{Kronecker-Weierstra\ss} canonical form \cite{Kunkel_2006}:
\begin{equation}
\begin{aligned}
\overbrace{\left[
	\begin{array}{cc}
	I_{n_f} & 0\\
	0 & N
	\end{array}
	\right]}^{Q_l\,E\,Q_r}
\left[\begin{array}{c}
\dot{x}_f\\
\dot{x}_\infty
\end{array}\right]
&= 
\overbrace{\left[
	\begin{array}{cc}
	J & 0\\
	0 & I_{n_\infty}
	\end{array}
	\right]}^{Q_l\,A\,Q_r}
\left[\begin{array}{c}
x_f\\
x_\infty
\end{array}\right]
+ 
\overbrace{\left[\begin{array}{c}
	B_f\\
	B_\infty
	\end{array}\right]}^{Q_l\,B}
u \\
y &= \underbrace{\left[C_f, C_\infty \right]}_{C\,Q_r}
\left[\begin{array}{c}
x_f\\
x_\infty
\end{array}\right] + D\, u.
\end{aligned}
\label{eq:KWCF}
\end{equation}
A regular DAE can thereby be decoupled in a dynamical part, represented by $x_f$, having finite eigenvalues according to the Jordan matrix $J$, and an algebraic part $x_\infty$ with all eigenvalues at infinity, described by $N$, a nilpotent matrix of nilpotency index $\nu$. This index induces a predominant characterization of DAEs in terms of its ``distance'' from an ODE and is often used to define the type of DAE at hand. Analogously to the case of ODEs, asymptotic stability of the DAE can be analyzed by inspecting the finite eigenvalues.
From the canonical form \eqref{eq:KWCF}, the transfer function of the DAE can be easily computed as \cite{Dai_1989}
\begin{equation}
G(s) = \underbrace{C_f \inv{\left(sI_{n_f}-J\right)} B_f + D}_{\Gprop}
+ \underbrace{C_\infty \left(-\sum_{i=0}^{\nu-1}N^is^i \right)B_\infty}_{\Gimp}
\label{eq:DAE_tf}
\end{equation}
It is a characteristic property of DAEs that the transfer function is generally composed of a \emph{proper} part $\Gprop$, resulting from the underlying ODE, and an \emph{improper} part $\Gimp$, a polynomial in $s$ of degree $\nu-1$.
In terms of reduction this has a very important implication: whenever the DAE presents improper behavior, the improper part has to be matched exactly to avoid unbounded errors at high frequencies. 
Generally speaking, the correct reduction of a DAE requires the decomposition of the system into dynamic and algebraic part,
in order to satisfy the algebraic constraints exactly and approximate the differential part. However, the computation of the canonical form \eqref{eq:KWCF} or, equivalently, of projectors onto the respective \emph{deflating subspaces}, is numerically ill-conditioned and not feasible in the large-scale setting. Fortunately, for some DAEs with particular structures it is possible to identify analytically the relevant subspaces without computing the canonical form.

\subsubsection{Semi-explicit, index-1 DAEs}
The special structure of DAE we will consider in this contribution is that of semi-explicit index-1 DAEs (henceforth SE-DAEs), i.e. generalized state-space systems of the form
\begin{equation}
\begin{aligned}
\left[
\begin{array}{cc}
E_{11} & 0\\
0 & 0
\end{array}
\right]
\left[\begin{array}{c}
\dot{x}_1\\
\dot{x}_2
\end{array}\right]
&= 
\left[
\begin{array}{cc}
A_{11} & A_{12}\\
A_{21} & A_{22}
\end{array}
\right]
\left[\begin{array}{c}
x_1\\
x_2
\end{array}\right]
+ 
\left[\begin{array}{c}
B_{11}\\
B_{22}
\end{array}\right]
u \\
y &= \left[C_{11}, C_{22}\right] 
\left[\begin{array}{c}
x_1\\
x_2
\end{array}\right] + D\, u
\end{aligned}
\label{eq:SE}
\end{equation}
where $\det\ts A_{22}, \det\ts E_{11}\ts\neq\ts 0$, $E_{11}\ts\in\ts\Reals^{n_{dyn}\times n_{dyn}}$ and all other matrices partitioned accordingly. $n_{dyn}\ts=\ts\rank{E_{11}}$ is called the \emph{dynamic order} of the DAE.
These systems arise frequently in electrical networks and power systems \cite{Freitas_2008,Rommes_2006,MORWiki_power}. If the semi-explicit form is not directly given it can often be achieved by a simple reordering of rows and columns.
The reason why a DAE as in \eqref{eq:SE} is called semi-explicit is that the \emph{underlying ODE} can be obtained replacing $x_2$ through the algebraic equation, which yields:
\begin{equation}
\begin{aligned}
\overbrace{E_{11}}^{E_1} \,\dot{x}_1 &= \overbrace{A_{11}-A_{12}\inv{A_{22}}A_{21}}^{A_1}\, x_1 + \overbrace{B_{11} - A_{12}\inv{A_{22}} B_{22}}^{B_1}\, u \\
y &= \underbrace{C_{11} - C_{22}\inv{A_{22}}A_{21}}_{C_1}\, x_1 + \underbrace{D - C_{22}\inv{A_{22}}B_{22}}_{D_1}\, u.
\end{aligned}
\label{eq:underlying ODE}
\end{equation}
While the underlying ODE is useful for theoretical considerations, in a large-scale setting it is not desirable, if at all feasible, to compute it explicitly. Note that in many applications, the dimension of the algebraic state-space $x_2$ is significantly larger than $x_1$. The reduction has therefore to be conducted based on the implicit representation \eqref{eq:SE}.	

\subsection{DAE-aware model reduction by Krylov-subspace methods}

In general, the reduction of DAEs cannot be conducted by applying standard methods for ODEs. The reason is that while in the ODE case the whole system can be reduced, for the DAE it is important to reduce only the dynamic part and keep the algebraic part responsible for the improper behavior seen in \eqref{eq:DAE_tf}.
Therefore, the general approach in the reduction of DAEs is to first identify and separate the dynamic from the algebraic part and subsequently reduce only the former while preserving the latter. Generally speaking, this requires the computation of deflating subspaces and the solution of either projected generalized Lyapunov equations (cf. \cite{Stykel_2004}) or the computation of projected Krylov subspaces (cf. \cite{GugercinStykel_2013}) depending on the choice of reduction method. 
A different approach that separates algebraic and dynamic part using the concept of \emph{tractability index} is discussed in \cite{Ali_2013} and generalized in \cite{Banagaaya_2014}.

This treatise focuses on Krylov-subspace methods due to their generality and computational advantages.
If the DAE presents a special structure, it is generally convenient to avoid the explicit computation of the respective subspaces and directly derive algorithms that implicitly reduce the ODE. This is done for example by Ahmad  and Benner in \cite{Ahmad_2014} for special index-3 systems and in \cite{Ahmad_2015} for special bilinear systems, as well as by Gugercin et al. in \cite{GugercinStykel_2013} for special index-1 and index-2 systems.
Since we will be focusing on SE-DAEs, we shall revise in the following the DAE-aware reduction strategy described in \cite{GugercinStykel_2013}. This approach is similar to the standard ODE two-sided Rational Krylov and adds appropriate shifting terms to ensure interpolation of the underlying ODE and matching of the improper part. 
\begin{thm}[Adapted from \cite{GugercinStykel_2013}]
	Let the full order model be a SE-DAE as in \eqref{eq:SE}. Then, the improper part $\Gimp$ of its transfer function defined in \eqref{eq:DAE_tf} is given by
	\begin{equation}
	\Gimp = \Dr \defeq - C_{22}\inv{A_{22}}B_{22}.
	\label{eq:P(s)}
	\end{equation}
	Further, the reduction by tangential interpolation of the underlying ODE can be achieved implicitly by computing the reduced model according to
	\begin{equation}
	\begin{aligned}
	E_r &= \trans{W}E\,V, \quad A_r = \trans{W}A\,V + \Ltan\,\Dr\,\Rtan\\
	B_r &= \trans{W}B + \Ltan\,\Dr, \quad C_r = C,V + \Dr\,\Rtan \\
	D_r &= D + \Dr
	\end{aligned}	
	\end{equation}
	where $V$ and $W$ are tangential input and output Krylov subspaces computed with the tangential directions in $\Rtan$ and $\Ltan$ respectively.
	\label{thm:Gugercin}
\end{thm}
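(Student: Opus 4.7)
The statement decomposes naturally into two parts: identifying the improper part $\Gimp$ of the SE-DAE, and verifying that the DAE-aware reduction formulas reproduce the standard Petrov-Galerkin ROM of the underlying ODE.

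For the first claim, my plan is to bring \eqref{eq:SE} into Kronecker-Weierstra\ss{} form \eqref{eq:KWCF} via the state change $\tilde{x}_2 \defeq A_{22} x_2 + A_{21} x_1$. Since $\det A_{22} \neq 0$, this decouples the algebraic variable and reduces the second block of \eqref{eq:SE} to $0 = \tilde{x}_2 + B_{22} u$, which matches the canonical algebraic block with $N = 0$, $B_\infty = B_{22}$, and $C_\infty = C_{22} \inv{A_{22}}$. Inserting $\nu = 1$ into \eqref{eq:DAE_tf} then immediately yields $\Gimp = -C_\infty B_\infty = -C_{22} \inv{A_{22}} B_{22} = \Dr$.

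For the interpolation claim, I would exploit the Sylvester-equation characterization \eqref{eq:Sylvester} of the Krylov subspaces. Partitioning $V$ and $W$ conformally with \eqref{eq:SE} into dynamic and algebraic blocks $(V_1, V_2)$ and $(W_1, W_2)$, the second block row of each Sylvester equation can be solved algebraically for $V_2$ and $\trans{W_2}$ using the invertibility of $A_{22}$. Substituting these expressions back into the first block row collapses \eqref{eq:V-Sylvester} and \eqref{eq:W-Sylvester} exactly onto the Sylvester equations of the underlying ODE \eqref{eq:underlying ODE} with matrices $(E_1, A_1, B_1, C_1)$. Hence $V_1$ and $W_1$ are tangential input and output Krylov bases for the underlying ODE.

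With these block relations in hand, each of the five reduced matrices can be evaluated in closed form. By design of the correction terms, the contributions coming from the $x_2$-blocks of $V$ and $W$ cancel, yielding $E_r = \trans{W_1} E_1 V_1$, $B_r = \trans{W_1} B_1$, $C_r = C_1 V_1$, $D_r = D_1$, and---after using both Sylvester equations to eliminate the cross-term $\trans{W} B \Rtan$ that appears while simplifying $\trans{W} A V$---also $A_r = \trans{W_1} A_1 V_1$. Consequently the DAE-aware ROM coincides with the standard Petrov-Galerkin projection of the underlying ODE by $(V_1, W_1)$, so tangential moment matching at the prescribed shifts follows from the classical Krylov-Sylvester result, and since $D_r$ already contains $\Dr$, the improper part $\Gimp$ is preserved exactly. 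The main technical obstacle is bookkeeping: the identities $\trans{W} B = \trans{W_1} B_1 - \Ltan\,\Dr$ and $CV = C_1 V_1 - \Dr\,\Rtan$ are what dictate the precise form of the correction terms, and tracking their interaction in $A_r$ is where most algebraic errors can arise.
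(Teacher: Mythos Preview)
Your proposal is correct and mirrors the paper's own argument almost exactly: the paper defers the proof to Section~\ref{sec:StabPrev1-orthogonal}, where Lemma~\ref{lm:SylvesterEquivalence} establishes precisely your Sylvester-equation collapse $(V,W)\mapsto(V_1,W_1)$, and Table~\ref{tab:2sidedComparison} then records the block-wise evaluation of $\trans{W}EV,\,\trans{W}AV,\,\trans{W}B,\,CV$ showing that the discrepancies from $\trans{W_1}E_1V_1$, $\trans{W_1}A_1V_1$, $\trans{W_1}B_1$, $C_1V_1$ are exactly the correction terms. One small caveat: in the $A_r$ simplification no $\trans{W}B\Rtan$ cross-term actually arises---the computation goes through by substituting $V_2=\inv{A_{22}}(B_{22}\Rtan-A_{21}V_1)$ and $\trans{W_2}=(\Ltan C_{22}-\trans{W_1}A_{12})\inv{A_{22}}$ directly into $\trans{W}AV$, after which the $\Ltan C_{22}\inv{A_{22}}A_{21}V_1$ contributions cancel and only $-\Ltan\,\Dr\,\Rtan$ survives.
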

\noindent
Note that since the DAE considered is of index $\nu\ts=\ts1$, the transfer function cannot be improper but can at most contain a constant implicit feedthrough. We shall denote this term $\Dr$ to underline that it is the feedthrough \emph{implicitly} retained in the DAE, as opposed to the explicit feedthrough term D.

We shall revisit this theorem in section \ref{sec:StabPrev1-orthogonal}, where we will give a new proof based on a special Sylvester equation. We will also underline the relationship between the implicit DAE reduction and the explicit reduction of the underlying ODE, as this is of central importance for the fidelity of the ROM. In this context, it will be shown that the reduction proposed in theorem \ref{thm:Gugercin} cannot be applied for orthogonal projections $V$=$W$ of arbitrary SE-DAEs. 

\subsection{Problem statement}
The goal of this contribution is to introduce techniques that can be used to ensure the stability of the ROM obtained by Krylov-based reduction of SE-DAEs. This is done by extending the concepts and methods introduced in this section to this class of DAEs. 
Even though the structure of SE-DAEs suggests how to compute the underlying ODE explicitly, the computations required for the DAE-aware, stability preserving reduction will use the original DAE matrices only. It will be shown how the proposed procedures are equivalent to a the corresponding reduction performed directly on the underlying ODE. Numerical investigations on different benchmark systems will be used to assess the effectiveness of the proposed methods.

\section{Stability-preserving reduction for strictly dissipative DAEs}\label{sec:StabPrev1-orthogonal}
The first case of stability preserving reduction is suitable for SE-DAEs that are given in a strictly dissipative realization. Similarly to the ODE case, orthogonal projection will be used to preserve dissipativity. Nevertheless, note that orthogonal projection cannot be conducted correctly on any SE-DAE, even by applying the DAE-aware MOR scheme of theorem \ref{thm:Gugercin}. In general, the resulting ROM will fail to approximate only the dynamic part and lose dissipativity. The conditions the DAE has to satisfy in order to obtain a correct reduction in this sense will be derived in the following.

\subsection{Strictly dissipative SE-DAEs}
The concept of a strictly dissipative formulation introduced in section \ref{sec:SD} for ODEs can be naturally extended to SE-DAEs by inspection of the underlying ODE, as stated in the following definition
\begin{definition}\label{def:SD_DAE}
	A SE-DAE System as in \eqref{eq:SE} with underlying ODE \eqref{eq:underlying ODE} is said to be in \emph{strictly dissipative form} if and only if 
	\[
	E_1 = \trans{E_1} \pd 0 \quad \logicand \quad A_1+\trans{A_1} \nd 0.
	\]
\end{definition}
\noindent
Clearly this definition respects the same properties as the one in section \ref{sec:SD} and, in particular, it implies stability of the SE-DAE.
Note that in some cases strict dissipativity of the SE-DAE can be assessed without explicitly computing the underlying ODE, as it 	is explained in the next proposition.
\begin{proposition}\label{PROP:REINSCH}
	Let $A$ and $A_1$ be defined as in \eqref{eq:SE} and \eqref{eq:underlying ODE} respectively. Assume $A +\trans{A} \nd 	0$. Then $ A_1+\trans{A_1} \nd 0$.
\end{proposition}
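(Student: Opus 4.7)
The plan is to use the standard reformulation $A + A^\top \prec 0 \iff x^\top A x < 0$ for every nonzero $x$ (since $x^\top A x = \tfrac{1}{2} x^\top (A + A^\top) x$ for any $A$), and then exhibit a clever test vector that forces the Schur-complement form $A_1 = A_{11} - A_{12} A_{22}^{-1} A_{21}$ to inherit the strict dissipativity from the full matrix.

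First I would observe that the hypothesis $A + A^\top \prec 0$ immediately implies that the diagonal block $A_{22} + A_{22}^\top$ is negative definite, hence $A_{22}$ is invertible, so the Schur complement $A_1$ and the substitution $x_2 \defeq -A_{22}^{-1} A_{21} x_1$ are well defined. (Invertibility of $A_{22}$ is in any case assumed for the SE-DAE \eqref{eq:SE}.)

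The main step is the following: for an arbitrary nonzero $x_1 \in \Reals^{n_{\text{dyn}}}$, set $x_2 \defeq -A_{22}^{-1} A_{21} x_1$ and $x \defeq \trans{(\trans{x_1}, \trans{x_2})}$. A direct block computation gives
\[
A x = \begin{pmatrix} A_{11} x_1 + A_{12} x_2 \\ A_{21} x_1 + A_{22} x_2 \end{pmatrix} = \begin{pmatrix} (A_{11} - A_{12} A_{22}^{-1} A_{21}) x_1 \\ 0 \end{pmatrix} = \begin{pmatrix} A_1 x_1 \\ 0 \end{pmatrix},
\]
and consequently $\trans{x} A x = \trans{x_1} A_1 x_1$. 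Since $x \neq 0$, the hypothesis $A + \trans{A} \nd 0$ yields $\trans{x} A x < 0$, hence $\trans{x_1} A_1 x_1 < 0$ for every nonzero $x_1$. Re-symmetrizing, $A_1 + \trans{A_1} \nd 0$, which is the claim.

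There is no serious obstacle here; the only mildly nontrivial ingredient is spotting the right test vector $x_2 = -A_{22}^{-1} A_{21} x_1$, which is precisely the substitution that eliminates the algebraic block and produces the underlying-ODE coefficient matrix $A_1$ in the upper slot. The whole argument is essentially a single line once this choice is made, and it makes no use of symmetry of the individual blocks, only of the sign condition on $A + \trans{A}$.
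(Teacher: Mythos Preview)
Your argument is correct, and it takes a different route from the paper's own proof. The paper proceeds by first passing to the inverse: it uses the congruence
\[
A + \trans{A} \nd 0 \iff \invt{A}\bigl(A+\trans{A}\bigr)\inv{A} = \inv{A}+\invt{A} \nd 0,
\]
then invokes the block-inverse identity $(\inv{A})_{11} = \inv{A_1}$ to conclude that the principal submatrix $\inv{A_1}+\invt{A_1}$ is negative definite, and finally undoes this with the congruence $A_1(\inv{A_1}+\invt{A_1})\trans{A_1} = A_1+\trans{A_1}\nd 0$. Your argument instead stays in the primal picture and exploits the elimination $x_2 = -\inv{A_{22}}A_{21}x_1$ to obtain $\trans{x}Ax = \trans{x_1}A_1 x_1$ directly. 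Both proofs ultimately rest on the same principle (negative definiteness is inherited under congruence and by principal submatrices), but yours is more self-contained: it needs only the invertibility of $A_{22}$, which you get immediately from the $(2,2)$ block of $A+\trans{A}\nd 0$, whereas the paper's route additionally uses invertibility of the full $A$ and of $A_1$, and the Schur block-inverse formula. The paper's approach, on the other hand, makes the structural reason transparent at the matrix level (Schur complements appear as blocks of the inverse), which can be convenient if one wants to iterate or generalize the statement.
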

\begin{proof}
	The proof is given in \ref{appx:Reinsch}.
\end{proof}
\noindent
Obviously, not all SE-DAEs are given in strictly dissipative form. Nonetheless, if the SE-DAE is asymptotically stable, then 	analogously to the ODE case, a transformation to strictly dissipative form can be found:

\begingroup
\renewcommand*{\arraystretch}{1.5}
\renewcommand*{\arraycolsep}{5pt}
\begin{lemma}\label{thm:LMI}
	Assume the SE-DAE in \eqref{eq:SE} is asymptotically stable. A strictly dissipative formulation is obtained by multiplying the DAE from the left with the matrix
	\[
	T = \left[\begin{array}{c c}
	\trans{E_{11}}P & -\trans{E_{11}}PA_{12}\inv{A_{22}}\\
	0 				&	I
	\end{array}	\right],
	\]
	where $P\ts=\trans{P}\ts\pd\ts0$ solves the Lyapunov inequality
	\[
	\trans{E_{11}}PA_1 + \trans{A_1}PE_{11} \nd 0
	\]
	and $A_1$ is the Schur complement defined in \eqref{eq:underlying ODE}.
\end{lemma}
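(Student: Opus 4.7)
The plan is to perform the left-multiplication $T \cdot (E, A, B)$ blockwise, observe that the structure of $T$ is precisely engineered so that (i) the SE-DAE form is preserved, and (ii) the new underlying ODE matrices can be read off directly from the dynamic block without having to re-compute a Schur complement. Once this is done, the strict-dissipativity conditions in Definition~\ref{def:SD_DAE} follow from the hypothesis on $P$ essentially by inspection.

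Concretely, I would first compute $TE$. Since the bottom row of $T$ multiplies the zero rows of $E$, and the off-diagonal $-E_{11}^\top P A_{12} A_{22}^{-1}$ in $T$ also multiplies the zero columns of $E$, only the $(1,1)$-block survives and equals $E_{11}^\top P E_{11}$. Thus the transformed descriptor matrix is $\mathrm{diag}(E_{11}^\top P E_{11},\,0)$, which is symmetric and, since $P\succ 0$ and $E_{11}$ is invertible, positive definite on the dynamic block. Hence $\tilde{E}_1 = E_{11}^\top P E_{11} = \tilde{E}_1^\top \succ 0$. This verifies the first condition of Definition~\ref{def:SD_DAE} and, incidentally, shows that the semi-explicit form is preserved.

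Next I would compute $TA$ blockwise. The crucial algebraic identity is that the top-right block is
\[
E_{11}^\top P A_{12} \;-\; E_{11}^\top P A_{12} A_{22}^{-1} A_{22} \;=\; 0,
\]
which is exactly why the term $-E_{11}^\top P A_{12} A_{22}^{-1}$ was placed in $T$. The top-left block collapses to $E_{11}^\top P (A_{11} - A_{12}A_{22}^{-1}A_{21}) = E_{11}^\top P A_1$ by definition of the Schur complement, while the bottom row $[A_{21},A_{22}]$ is untouched. So the transformed $A$-matrix has the blocks $\tilde{A}_{11}=E_{11}^\top P A_1$, $\tilde{A}_{12}=0$, $\tilde{A}_{21}=A_{21}$, $\tilde{A}_{22}=A_{22}$. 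Applying formula~\eqref{eq:underlying ODE} to these transformed blocks, and using $\tilde{A}_{12}=0$, the new underlying ODE has system matrix $\tilde{A}_1 = \tilde{A}_{11} = E_{11}^\top P A_1$. The Lyapunov inequality assumed on $P$ then directly gives $\tilde{A}_1 + \tilde{A}_1^\top = E_{11}^\top P A_1 + A_1^\top P E_{11} \prec 0$, which is the second condition of Definition~\ref{def:SD_DAE}.

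Finally, I would include a short justification that such a $P$ actually exists. Asymptotic stability of the SE-DAE is equivalent to asymptotic stability of the underlying ODE pair $(E_{11},A_1)$ (this follows from the Kronecker–Weierstra{\ss} form \eqref{eq:KWCF} and the fact that $E_{11}$ is invertible), and for a stable regular pair with invertible $E_{11}$ the generalized Lyapunov equation $E_{11}^\top P A_1 + A_1^\top P E_{11} = -Q$ admits a unique symmetric positive definite solution $P$ for every $Q\succ 0$; hence the strict inequality variant has a solution as well. The only mildly delicate point I expect is being explicit about the block-identity that kills $\tilde{A}_{12}$, since the whole argument hinges on it and it is what allows the Schur complement $A_1$ to reappear in the transformed dynamic block without further computation; everything else is essentially substitution.
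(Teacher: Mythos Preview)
Your proposal is correct and follows essentially the same route as the paper: compute the transformed matrices $TE$ and $TA$ blockwise, observe that the off-diagonal block $\tilde A_{12}$ vanishes so that the new underlying ODE has $\tilde E_1=E_{11}^\top P E_{11}$ and $\tilde A_1=E_{11}^\top P A_1$, and then read off strict dissipativity from the Lyapunov inequality on $P$. The only cosmetic difference is that the paper first factors $T$ as $\mathrm{diag}(E_{11}^\top P,\,I)\cdot\begin{bmatrix}I & -A_{12}A_{22}^{-1}\\ 0 & I\end{bmatrix}$ before applying it, whereas you multiply directly; your additional paragraph on the existence of $P$ is a welcome completeness check that the paper leaves implicit.
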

\begin{proof}
	Note that the transformation $T$ can be factorized as
	\[
	T = \left[\begin{array}{cc}
	\trans{E_{11}}P & 0\\
	0 				&	I
	\end{array}	\right]\cdot
	\left[\begin{array}{cc}
	I & -A_{12}\inv{A_{22}}\\
	0 				&	I
	\end{array}	\right].
	\]
	Applying this transformation to the SE-DAE yields
	\[
	\left[
	\begin{array}{cc}
	\trans{E_{11}}PE_{11} & 0\\
	0 & 0
	\end{array}
	\right]
	\dot{x}
	= 
	\left[
	\begin{array}{cc}
	\trans{E_{11}}PA_{1} & 0\\
	A_{21} & A_{22}
	\end{array}
	\right]
	x
	+ 
	\left[\begin{array}{c}
	\trans{E_{11}}PB_{1}\\
	B_{22}
	\end{array}\right]
	u
	\]
	which is in strictly dissipative form by construction.
\end{proof}
\endgroup
\noindent
Therefore, finding a strictly dissipative transformation reduces to solving a convex feasibility problem, which might still be difficult in the large-scale case. Alternatively, a Lyapunov equation with appropriate right-hand side might be solved.
Note at this point that if the FOM is not given in strictly dissipative form, stability preserving reduction can still be achieved by  using the second method proposed in section \ref{sec:Stability_Preserving_H2_pseudo}.

\subsection{Orthogonal projection of SE-DAEs}\label{sec:OrthogonalProjectionOfDAEs}
Combining what has been discussed so far, we are now ready to establish the model reduction strategy that acts only on the sparse matrices of the SE-DAE and ensures stability of the ROM.
Similarly to the ODE case, we will approach the problem by applying orthogonal projection to the DAE. 
First, we start by demonstrating how the reduction framework as in theorem \ref{thm:Gugercin} generally fails to comprise the special case of orthogonal projection. This is counterintuitive, since for ODEs orthogonal projection is a special case of skew projection ($V\ts\neq\ts W$).
To do so, we compare from a theoretical standpoint the reduction of the underlying ODE to the implicit reduction of the DAE.

Recalling the SE-DAE \eqref{eq:SE} and the corresponding explicit representation of the underlying ODE \eqref{eq:underlying ODE} we start by showing the equivalence of the Krylov subspaces computed with either representation.

\begin{lemma}\label{lm:SylvesterEquivalence}
	Given the SE-DAE \eqref{eq:SE} and the underlying ODE \eqref{eq:underlying ODE}, the following equivalence between Sylvester equations holds
	\begin{subequations}
		\begin{align}
		&A
		\left[\begin{array}{c}
		V_1\\
		V_2
		\end{array}\right]
		- E
		\left[\begin{array}{c}
		V_1\\
		V_2
		\end{array}\right]
		S_V
		- B \,\Rtan = 0 \label{eq:Implicit-V}\\
		\iff &A_1 V_1 - E_1 V_1 S_V - B_1 \Rtan = 0 \label{eq:Explicit-V}
		\end{align}
	\end{subequations}
	where $V = \trans{\left[\trans{V_1},\trans{V_2}\right]}$ is partitioned according to $n_{dyn}$.
\end{lemma}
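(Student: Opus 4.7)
The plan is a direct block-matrix computation that exploits the invertibility of $A_{22}$ to eliminate $V_2$, exposing the Schur complement $A_1$ on the dynamic block.

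First I would write out \eqref{eq:Implicit-V} in partitioned form using the block structure of $E$, $A$, $B$ from \eqref{eq:SE}. Splitting $V$ according to $n_{dyn}$, the implicit Sylvester equation decouples into a ``dynamic row''
\[
A_{11}V_1 + A_{12}V_2 - E_{11}V_1 S_V - B_{11}\Rtan = 0
\]
and an ``algebraic row''
\[
A_{21}V_1 + A_{22}V_2 - B_{22}\Rtan = 0,
\]
because the $(2,2)$-block of $E$ is zero so no $V_2 S_V$ term appears.

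Next, since $\det A_{22}\neq 0$ by assumption, the algebraic row can be solved uniquely for
\[
V_2 = \inv{A_{22}}\bigl(B_{22}\Rtan - A_{21}V_1\bigr).
\]
Substituting this expression into the dynamic row and collecting terms yields
\[
\bigl(A_{11}-A_{12}\inv{A_{22}}A_{21}\bigr)V_1 - E_{11}V_1 S_V - \bigl(B_{11}-A_{12}\inv{A_{22}}B_{22}\bigr)\Rtan = 0,
\]
which, by the definitions of $A_1$, $E_1$, $B_1$ in \eqref{eq:underlying ODE}, is exactly \eqref{eq:Explicit-V}. This proves the forward implication $\eqref{eq:Implicit-V}\Rightarrow\eqref{eq:Explicit-V}$.

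For the reverse direction, given $V_1$ satisfying \eqref{eq:Explicit-V}, I would \emph{define} $V_2\defeq \inv{A_{22}}(B_{22}\Rtan - A_{21}V_1)$; then the algebraic row is satisfied by construction, and reversing the algebra above shows that the dynamic row is satisfied as well. Hence $V = \trans{[\trans{V_1},\trans{V_2}]}$ solves \eqref{eq:Implicit-V}. I do not expect any real obstacle: the argument is purely linear-algebraic and hinges only on $\det A_{22}\neq 0$, which is part of the definition of a SE-DAE; the only thing worth pointing out explicitly is that the vanishing $(2,2)$-block of $E$ is what keeps $S_V$ out of the algebraic row and thereby allows $V_2$ to be determined by a plain linear solve rather than by another Sylvester equation.
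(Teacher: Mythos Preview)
Your proposal is correct and mirrors the paper's own proof almost exactly: the paper also writes \eqref{eq:Implicit-V} block-wise, solves the algebraic row for $V_2=\inv{A_{22}}(-A_{21}V_1+B_{22}\Rtan)$ using $\det A_{22}\neq 0$, and substitutes to obtain the Schur-complement Sylvester equation \eqref{eq:Explicit-V}. If anything, you are slightly more explicit than the paper in spelling out the reverse implication via the definition of $V_2$.
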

\begin{proof}
	\begin{equation*}
	\begin{aligned}
	&A
	\left[\begin{array}{c}
	V_1\\
	V_2
	\end{array}\right]
	- E
	\left[\begin{array}{c}
	V_1\\
	V_2
	\end{array}\right]
	S_V
	- B \Rtan = 0 \\
	\iff &\left\{\begin{array}{l}
	A_{11} V_1 + A_{12}V_2-E_{11}V_1 S_V - B_{11}\Rtan = 0\\
	V_2 = \inv{A_{22}}\left(-A_{21}V_1 + B_{22}\Rtan\right)
	\end{array}\right.\\
	\iff &A_1 V_1 - E_1 V_1 S_V - B_1 \Rtan = 0
	\end{aligned}
	\end{equation*}
\end{proof}
\noindent
Naturally, the dual result in terms of the output Sylvester equation \eqref{eq:W-Sylvester} holds as well. This result implies that the projection matrices $V_1$ and $W_1$ needed to approximate the underlying ODE can be computed  by solving large sparse system of equations in terms of the original DAE matrices without explicitly computing Schur complements. 
Even more importantly, this result implies, through the duality between Krylov and Sylvester, that both the reduced model obtained by reducing the underlying ODE using $V_1,W_1$ and the one obtained through the reduction of the DAE using $V,W$ share the same interpolation data $(S_V,\Rtan)$ and hence achieve tangential matching of the same moments of the original model.
However, this result does not guarantee that these reduced models will actually be the same.
In order to get this result, we have to ensure that the second procedure implicitly reduces the underlying ODE by operating on the DAE. 
We can analyze this by projecting the ODE and SE-DAE with their respective projection matrices and compare the expressions. The results are summarized in table \ref{tab:2sidedComparison}, where the last column includes the correction terms of theorem \ref{thm:Gugercin}.
\begin{table}[h!]
	\centering
	\begin{tabular}{r|c|c|c}
		& ODE $\left(V_1,W_1\right)$ & SE-DAE  $\left(V,W\right)$ & correction \cite{GugercinStykel_2013}\\ 
		\midrule
		$E_r$ & $\trans{W_1}E_{1}V_1$ & $\trans{W_1}E_{1}V_1$& -\\  
		$A_r$ & $\trans{W_1}A_1 V_1$ & 
		$\trans{W_1}A_1 V_1 \,\uwave{- \Ltan \Dr \Rtan} $ & $\Ltan \Dr \Rtan$\\
		$B_r$ & $\trans{W_1}B_1$ &  $\trans{V_1}B_1 \,\uwave{- \Ltan \Dr}$ & $\Ltan \Dr$\\
		$C_r$ & $C_1 V_1$ & $C_1 V_1 \,\uwave{- \Dr \Rtan}$ & $\Dr \Rtan$ \\
		$D_r$ &$D + \Dr$ & $D \,\uwave{\textcolor{white}{+ \Dr}}$ & $\Dr$\\
		\bottomrule
	\end{tabular} 
	\caption{Comparison of a skew projection for ODE and SE-DAE}
	\label{tab:2sidedComparison}
\end{table}

The results in the table show that the two reduced models differ exactly by the terms that are compensated in the SE-DAE aware reduction algorithm of theorem \ref{thm:Gugercin}, confirming that this procedure effectively yields moment matching and implicit reduction of the underlying ODE for a skew projection.

Will this result hold true also for one-sided reduction? 
The theory on ODE reduction suggests that this should be the case.
However, it turns out that this is not always true when operating with SE-DAEs, as we will state in the following theorem.

\begin{thm}\label{thm:orthogonalReduction4Stability}
	Consider a SE-DAE as in \eqref{eq:SE} and its underlying ODE defined in \eqref{eq:underlying ODE}.
	The ROM obtained by applying orthogonal projection ($W=V$) of the SE-DAE, being $V$ a basis of the Krylov subspace as in \eqref{eq:Implicit-V}, is equivalent to the one resulting from the respective orthogonal projection applied to the underlying ODE with $V_1$ satisfying \eqref{eq:Explicit-V}, provided that $B_{22}=0$.
\end{thm}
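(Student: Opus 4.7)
My plan is to prove the theorem by direct block computation, showing that under the hypothesis $B_{22}=0$ each of the reduced matrices obtained by the orthogonal projection $W=V$ of the SE-DAE coincides with the corresponding matrix obtained by the orthogonal projection $W_1=V_1$ of the underlying ODE. The entry point is Lemma \ref{lm:SylvesterEquivalence}: writing $V=\trans{[\trans{V_1},\trans{V_2}]}$ according to the partition $n_{dyn}$, the second block-row of \eqref{eq:Implicit-V} reads $A_{21}V_1+A_{22}V_2-B_{22}\Rtan=0$, so that the assumption $B_{22}=0$ immediately supplies both $V_2=-\inv{A_{22}}A_{21}V_1$ and the identity $A_{21}V_1+A_{22}V_2=0$. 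From \eqref{eq:underlying ODE} one also records $B_1=B_{11}$ and $\Dr=-C_{22}\inv{A_{22}}B_{22}=0$.

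With $V_2$ available in closed form, the reduced matrices follow from straightforward block algebra. The descriptor matrix is immediate, $\trans{V}EV=\trans{V_1}E_{11}V_1=\trans{V_1}E_1V_1$, thanks to the zero block in $E$. For the system matrix, expanding $\trans{V}AV$ into four blocks, the contribution $\trans{V_2}(A_{21}V_1+A_{22}V_2)$ vanishes by the identity above, so the surviving part collapses to $\trans{V_1}(A_{11}-A_{12}\inv{A_{22}}A_{21})V_1=\trans{V_1}A_1V_1$. Similarly, $\trans{V}B=\trans{V_1}B_{11}+\trans{V_2}B_{22}=\trans{V_1}B_1$, and $CV=(C_{11}-C_{22}\inv{A_{22}}A_{21})V_1=C_1V_1$ after substituting $V_2$. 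Finally the feedthroughs agree, since $\Dr=0$ gives $D+\Dr=D$. Matching all five quantities to their ODE counterparts establishes the equivalence of the two ROMs.

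A cleaner structural reading of the same fact is the following: the four correction terms $\Ltan\Dr\Rtan$, $\Ltan\Dr$, $\Dr\Rtan$, $\Dr$ introduced in Theorem \ref{thm:Gugercin} are all proportional to $\Dr$ and hence vanish identically when $B_{22}=0$; consequently the DAE-aware reduction coincides with the uncorrected Petrov--Galerkin projection of the SE-DAE, which in turn matches the ODE reduction by Table \ref{tab:2sidedComparison}. The main obstacle I anticipate is purely notational: in the orthogonal case no left tangential direction $\Ltan$ is a priori available, so Theorem \ref{thm:Gugercin} cannot be invoked verbatim; the cleanest route is therefore to carry out the direct block computation and only afterwards reinterpret the outcome through the vanishing of $\Dr$.
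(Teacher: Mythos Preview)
Your proposal is correct and is essentially the same approach as the paper's: both proofs use Lemma~\ref{lm:SylvesterEquivalence} to express $V_2$ in terms of $V_1$ and then carry out a direct block computation of $\trans{V}EV$, $\trans{V}AV$, $\trans{V}B$, $CV$, $D$, comparing each to its ODE counterpart. The only organisational difference is that the paper first performs the computation \emph{without} assuming $B_{22}=0$ (applying the correction terms of Theorem~\ref{thm:Gugercin} with $\Ltan=0$) and records the residual terms in Table~\ref{tab:OrthogonalProjection}, then observes they all vanish for $B_{22}=0$; you impose $B_{22}=0$ from the outset, which slightly shortens the algebra but forgoes the general expressions the paper subsequently reuses to discuss the symmetric case $A_{22}=\trans{A_{22}}$, $A_{12}=\trans{A_{21}}$, $C_{22}=\trans{B_{22}}$.
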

\begin{proof}
	The proof is obtained by straightforward computation of the orthogonal projection of the SE-DAE with $V = \trans{\left[\trans{V_1},\trans{V_2}\right]}$ and by using the relationship $V_{2} = \inv{A_{22}}\left(-A_{21}V_{1} + B_{22}\Rtan\right)$ obtained in lemma \ref{lm:SylvesterEquivalence}. The correction terms of table \ref{tab:2sidedComparison} will be applied. Since we only compute $V$ as an input Krylov subspace, the matrix of left tangential directions $\Ltan$ is set to 0. Subsequently, the ROM obtained is compared to the one that would result from a direct orthogonal projection of the underlying ODE. For brevity we will omit the computations and directly compare the reduced matrices in table \ref{tab:OrthogonalProjection}.
	\begin{table}[h!]
		\centering
		\begin{tabular}{r|c|c}
			& ODE ($W_1$=$V_1$) & SE-DAE ($W$=$V$), corrected \cite{GugercinStykel_2013}\\ 
			\midrule
			$E_r$ & $\trans{V_1}E_{11}V_1$ & $\trans{V_1}E_{11}V_1$ \\ 
			$A_r$ & $\trans{V_1}A_1 V_1$ & $\trans{V_1}A_1 V_1 
				\uwave{+\left[\trans{V_1}\left(A_{12}\inv{A_{22}}-\trans{A_{21}}\invt{A_{22}} \right)+ \trans{\Rtan}\trans{B_{22}}\invt{A_{22}} \right]B_{22}\Rtan}$ \\ 
			$B_r$ & $\trans{V_1}B_1$ & $\trans{V_1}
			\uwave{\left(B_{11}-\trans{A_{21}}\invt{A_{22}}B_{22}\right) + \trans{\Rtan}\trans{B_{22}}\invt{A_{22}}B_{22}}$\\ 
			$C_r$ & $C_1 V_1$ & $C_1 V_1$ \\ 
			$D_r$ & $D + \Dr$ & $D + \Dr$ \\ 
			\bottomrule
		\end{tabular} 
		\caption{Comparison of an orthogonal projection for ODE and SE-DAE}
		\label{tab:OrthogonalProjection}
	\end{table}
	
	The results show how this procedure generally fails to correctly reduce the underlying ODE. Nevertheless, if the original model satisfies $B_{22} = 0$, then the reduction is correct after all. 
\end{proof}

Accordingly, care needs to be taken when applying orthogonal projection to reduce DAEs. Even though moment matching is still guaranteed by the computation of the Krylov subspace, the reduction fails to project the underlying ODE unless the condition $B_{22}=0$ is satisfied. Even more importantly: the additional terms in table \ref{tab:OrthogonalProjection} might cause loss of dissipativity of the reduced model!
The relevance of this will be shown through a numerical example in section \ref{sec:examples}.
Further note that the assumption $B_{22}=0$ is relevant from a practical standpoint but restrictive from a theoretical one, since the results in table \ref{tab:OrthogonalProjection} infer that a system satisfying $A_{22}\ts=\ts\trans{A_{22}}, A_{12}\ts=\ts\trans{A_{21}}, C_{22}\ts=\ts\trans{B_{22}}$ could be correctly reduced through orthogonal projection by choosing $\Ltan\ts=\ts \trans{\Rtan}$ in the correction terms.	In fact, in this case $\trans{\Rtan}\trans{B_{22}}\invt{A_{22}}B_{22}\ts=\ts -\Ltan\Dr$ and $B_{11}-\trans{A_{21}}\invt{A_{22}}B_{22}\ts=\ts B_1$.

Finally, note that this result extends naturally to the dual case of output-based orthogonal projection.
\begin{corollary}\label{thm:OutputOrthogonal}
	Consider an SE-DAE as in \eqref{eq:SE} and its underlying ODE defined in \eqref{eq:underlying ODE}.
	The ROM obtained by applying orthogonal projection $(V\ts=\ts W)$ of the SE-DAE, being $W$ a basis for the Krylov subspace as in \eqref{eq:W-Sylvester}, is equivalent to the one resulting from the respective orthogonal projection applied to the underlying ODE, provided that $C_{22}\ts=\ts0$ or $A_{22}\ts=\ts\trans{A_{22}}, A_{12}\ts=\ts\trans{A_{21}}, C_{22}\ts=\ts\trans{B_{22}}$.
\end{corollary}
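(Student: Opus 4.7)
The plan is to mirror the proof of Theorem \ref{thm:orthogonalReduction4Stability} by invoking duality between the input and output Krylov subspaces. First I would dualize Lemma \ref{lm:SylvesterEquivalence}: transposing the output Sylvester equation \eqref{eq:W-Sylvester} and partitioning $W = \trans{[\trans{W_1}, \trans{W_2}]}$ conformally with $n_{dyn}$, the second block row gives an explicit expression for $W_2$ in terms of $W_1$, namely $W_2 = \invt{A_{22}}\bigl(-\trans{A_{12}} W_1 + \trans{C_{22}}\trans{\Ltan}\bigr)$, and the first block row reduces to the output Sylvester equation for the underlying ODE in terms of $W_1$. This ensures that, exactly as in Lemma \ref{lm:SylvesterEquivalence}, the Krylov data $(S_W,\Ltan)$ is consistent between the SE-DAE and its underlying ODE.

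Next I would substitute $V = W$ and $\Rtan = 0$ into the corrected reduction formulas of Theorem \ref{thm:Gugercin} (dually to what was done with $\Ltan = 0$ in Theorem \ref{thm:orthogonalReduction4Stability}) and compute each reduced matrix in terms of the partition $W = \trans{[\trans{W_1},\trans{W_2}]}$, plugging in the expression for $W_2$ obtained above. By direct expansion one obtains a table analogous to Table \ref{tab:OrthogonalProjection}: $E_r$, $A_r$ (modulo correction), $B_r$ and $D_r$ coincide with the ODE result, while $C_r$ and $A_r$ pick up spurious contributions proportional to $\trans{C_{22}}\trans{\Ltan}$ (the dual of the $B_{22}\Rtan$ terms appearing in Theorem \ref{thm:orthogonalReduction4Stability}). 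Setting $C_{22} = 0$ kills these contributions, producing exactly the matrices obtained by orthogonal projection of the underlying ODE with $W_1$.

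For the second sufficient condition I would appeal to the observation following Theorem \ref{thm:orthogonalReduction4Stability}. Under $A_{22} = \trans{A_{22}}$, $A_{12} = \trans{A_{21}}$, $C_{22} = \trans{B_{22}}$, the underlying ODE is self-adjoint in the sense that $A_1$, $E_1$ satisfy the adjoint symmetries of the SE-DAE's dynamic block and $C_1 = \trans{B_1}$, $\Dr = -\trans{B_{22}}\inv{A_{22}} B_{22}$. Choosing $\Rtan = \trans{\Ltan}$ in the correction terms, the remaining spurious contributions take the form $\trans{\Ltan}\trans{C_{22}}\invt{A_{22}}C_{22} = -\Rtan\Dr$ (using $A_{22}=\trans{A_{22}}$ and $C_{22}=\trans{B_{22}}$), which is exactly compensated by the standard correction factor $\Dr\Rtan$ of Theorem \ref{thm:Gugercin}; by the same cancellation observed in the input case, the reduced matrices again coincide with those of the underlying ODE's orthogonal projection.

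The main obstacle I expect is purely bookkeeping: keeping track of transposes and block indices when substituting $W_2$ into $\trans{W}AW$ and $CW$, and organizing the spurious terms so that the cancellations under each of the two hypotheses become transparent. Conceptually nothing new happens beyond the proof of Theorem \ref{thm:orthogonalReduction4Stability}, so I would present the argument compactly by stating the dual of Lemma \ref{lm:SylvesterEquivalence}, displaying the counterpart of Table \ref{tab:OrthogonalProjection}, and verifying the two cancellations.
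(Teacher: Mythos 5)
Your proposal is correct and takes essentially the same route as the paper, whose proof of this corollary is simply the statement that it is ``analogous to the dual case'' of theorem \ref{thm:orthogonalReduction4Stability} together with the same choice $\Rtan=\trans{\Ltan}$ for the symmetric case: your dualized Sylvester relation $W_2 = \invt{A_{22}}\left(-\trans{A_{12}}W_1 + \trans{C_{22}}\trans{\Ltan}\right)$, the substitution $\Rtan=0$ mirroring $\Ltan=0$, and the two cancellations are exactly the intended argument. One minor transposition slip: the spurious term in $C_r$ under the symmetry conditions is $C_{22}\invt{A_{22}}\trans{C_{22}}\trans{\Ltan} = -\Dr\Rtan$ (not $-\Rtan\Dr$), which is then cancelled by the correction $+\Dr\Rtan$ of theorem \ref{thm:Gugercin}.
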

\begin{proof}
	The proof is analogous to the dual case. Note that also in this case, the condition $C_{22}\ts=\ts0$ can be dropped if the system satisfies $A_{22}\ts=\ts\trans{A_{22}}, A_{12}\ts=\ts\trans{A_{21}}, C_{22}\ts=\ts\trans{B_{22}}$ and the right tangential directions for the correction terms are chosen such that $\Rtan\ts=\ts\trans{\Ltan}$.
\end{proof}

To sum up, provided that the SE-DAE system considered is not excited through the algebraic equations or the output influenced by algebraic variables, then DAE-aware reduction by orthogonal projection as proposed in theorem \ref{thm:Gugercin} can effectively reduce the underlying ODE implicitly, by choosing the right type of Krylov subspace (input or output). If the system has some symmetries with respect to the algebraic variables, namely $A_{22}\ts=\ts\trans{A_{22}}, A_{12}\ts=\ts\trans{A_{21}}, C_{22}\ts=\ts\trans{B_{22}}$, then both input and output Krylov subspaces can be used, provided the tangential directions are chosen such that $\Rtan\ts=\ts \trans{\Ltan}$.

\subsection{Stability-preserving reduction by orthogonal projection}
Due to the previous results, we understand that we can implicitly apply orthogonal reduction to the underlying ODE choosing either input or output Krylov subspaces depending on the structure of the system. As we know from ODE theory, this process preserves stability in case that the underlying ODE---and hence by definition \ref{def:SD_DAE} also the SE-DAE---is given in strictly dissipative form.
\begin{thm}
	Assume the SE-DAE as in \eqref{eq:SE} is given in strictly dissipative form. Further assume that either $B_{22}\ts=\ts0$ or $A_{22}\ts=\ts\trans{A_{22}}, A_{12}\ts=\ts\trans{A_{21}}, C_{22}\ts\ts=\trans{B_{22}}$. Then reduction of the SE-DAE through orthogonal projection using the input Krylov subspace defined in \eqref{eq:Implicit-V} and the DAE-aware procedure of theorem \ref{thm:Gugercin} yields an asymptotically stable ROM that implicitly reduces the underlying ODE.
\end{thm}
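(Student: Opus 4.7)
The plan is to chain together two results already in hand: the equivalence between the DAE-aware orthogonal projection and a genuine Galerkin projection of the underlying ODE (Theorem \ref{thm:orthogonalReduction4Stability} together with its output-dual Corollary \ref{thm:OutputOrthogonal}), and the classical fact that Galerkin projection preserves strict dissipativity (Section \ref{sec:SD}). In the case $B_{22}=0$, Theorem \ref{thm:orthogonalReduction4Stability} applies directly (with $\Ltan=0$) and shows that the Gugercin--Stykel correction of Theorem \ref{thm:Gugercin} yields exactly $\trans{V_1} E_1 V_1$, $\trans{V_1} A_1 V_1$, $\trans{V_1} B_1$, $C_1 V_1$, $D+\Dr$, i.e.\ the matrices of an orthogonal Galerkin projection of the underlying ODE by $V_1$. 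In the symmetric case $A_{22}=\trans{A_{22}}$, $A_{12}=\trans{A_{21}}$, $C_{22}=\trans{B_{22}}$, I would invoke the remark following Theorem \ref{thm:orthogonalReduction4Stability}, choosing the left tangential directions entering the correction as $\Ltan = \trans{\Rtan}$; as noted there, the additional terms in Table \ref{tab:OrthogonalProjection} cancel identically and the same identification with a Galerkin projection of the underlying ODE goes through. Lemma \ref{lm:SylvesterEquivalence} supplies the remaining ingredient by guaranteeing that the top block $V_1$ of $V$ satisfies the ODE-level Sylvester equation \eqref{eq:Explicit-V}, establishing the \emph{implicitly reduces the underlying ODE} half of the claim.

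For asymptotic stability, I would next unfold Definition \ref{def:SD_DAE}: strict dissipativity of the SE-DAE is, by definition, the statement that the underlying ODE obeys $E_1 = \trans{E_1}\pd 0$ and $A_1 + \trans{A_1}\nd 0$. Applying the orthogonal Galerkin projection by $V_1$ identified in the first step then gives $\trans{V_1} E_1 V_1 \pd 0$ (since $E_1\pd 0$ and $V_1$ has full column rank) and $\trans{V_1} A_1 V_1 + \trans{(\trans{V_1} A_1 V_1)} = \trans{V_1}(A_1+\trans{A_1})V_1 \nd 0$. Hence the reduced pair $(E_r,A_r)$ inherits strict dissipativity from the underlying ODE; by the elliptic-norm argument recalled in Section \ref{sec:SD}, all its generalized eigenvalues lie in the open left half-plane, and the ROM is asymptotically stable.

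The main technical obstacle I foresee is establishing that the block $V_1$ inherited from $V$ is of full column rank $n$, since it is only a byproduct of the DAE Krylov basis $V$. Here Lemma \ref{lm:SylvesterEquivalence} is again decisive: $V_1$ is itself a solution of the underlying-ODE Sylvester equation \eqref{eq:Explicit-V} driven by $(S_V,\Rtan)$, and therefore spans the corresponding ODE input Krylov subspace. Under the customary nondegeneracy assumptions on shifts and tangential directions used to build $V$ (distinct finite shifts that are not poles of the underlying ODE, together with nontrivial tangential data), this subspace has dimension $n$ and $V_1$ is of full column rank, which closes the argument. The remainder of the proof is bookkeeping along the lines of the two tables already compiled in the paper.
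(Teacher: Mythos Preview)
Your proposal is correct and follows essentially the same two-step approach as the paper: first identify the DAE-aware orthogonal projection with a Galerkin projection of the underlying ODE, then invoke preservation of strict dissipativity. Your write-up is in fact more careful than the paper's terse proof (which cites only Lemma~\ref{lm:SylvesterEquivalence}, whereas you rightly point to Theorem~\ref{thm:orthogonalReduction4Stability} and the remark handling the symmetric case, and you additionally flag the full-column-rank requirement on $V_1$); the reference to Corollary~\ref{thm:OutputOrthogonal} is superfluous here since the present statement concerns the \emph{input} Krylov subspace only.
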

\begin{proof}
	The proof is obtained by observing that due to lemma \ref{lm:SylvesterEquivalence}, the proposed reduction of the SE-DAE is equivalent to the direct reduction of the underlying ODE. Applying orthogonal projection preserves strict dissipativity and hereby stability.
\end{proof}
Naturally the dual result holds as well.
\begin{corollary}
	Assume the SE-DAE as in \eqref{eq:SE} is given in strictly dissipative form. Further assume that either $C_{22}\ts=\ts0$ or $A_{22}\ts=\ts\trans{A_{22}}, A_{12}\ts=\ts\trans{A_{21}}, C_{22}\ts=\ts\trans{B_{22}}$.  Then reduction of the SE-DAE through orthogonal projection using an output Krylov subspace and the DAE-aware procedure of theorem \ref{thm:Gugercin} yields an asymptotically stable ROM that implicitly reduces the underlying ODE.
\end{corollary}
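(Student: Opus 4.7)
The plan is to obtain this corollary essentially as the dual of the preceding theorem, assembling three ingredients that have already been established in the excerpt. First, I would invoke Corollary \ref{thm:OutputOrthogonal}, which guarantees that under either $C_{22}=0$ or the symmetry hypotheses $A_{22}=\trans{A_{22}}$, $A_{12}=\trans{A_{21}}$, $C_{22}=\trans{B_{22}}$ (with $\Rtan=\trans{\Ltan}$ in the correction terms), the DAE-aware orthogonal projection of the SE-DAE driven by an output Krylov subspace $W$ yields exactly the same reduced matrices as a direct orthogonal projection $W_1^\top(\cdot)W_1$ of the underlying ODE \eqref{eq:underlying ODE}, where $W_1$ is the top block of $W$ partitioned according to $n_{dyn}$ and solves the dual of \eqref{eq:Explicit-V} via the dual Sylvester equivalence of Lemma \ref{lm:SylvesterEquivalence}.

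Second, I would read off from Definition \ref{def:SD_DAE} that strict dissipativity of the SE-DAE means precisely $E_1=\trans{E_1}\pd 0$ and $A_1+\trans{A_1}\nd 0$ for the matrices $E_1, A_1$ of the underlying ODE. Then, by the standard argument reviewed in Section \ref{sec:SD}, applying a Galerkin projection $W_1^\top(\cdot)W_1$ with a full-column-rank $W_1$ preserves both $\trans{E_1}\pd 0$ (yielding $\trans{W_1}E_1 W_1\pd 0$) and the negative definiteness of the symmetric part of $A_1$. Hence the reduced pair $(E_r,A_r)$ is itself strictly dissipative and therefore asymptotically stable, i.e.\ all finite generalized eigenvalues of $(A_r,E_r)$ have strictly negative real part.

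Third, combining the two steps, the DAE-aware orthogonally projected ROM coincides with the orthogonally projected reduced underlying ODE, which is strictly dissipative, hence asymptotically stable. The tangential moment-matching property required for ``implicit reduction of the underlying ODE'' follows from the dual statement of Lemma \ref{lm:SylvesterEquivalence}, since the shared interpolation data $(S_W,\Ltan)$ is encoded in the same Sylvester equation whether written in terms of the DAE matrices or the Schur-complemented ODE matrices.

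There is no genuine obstacle here beyond bookkeeping: the only subtle point is making sure that, in the symmetric case, the choice $\Rtan=\trans{\Ltan}$ imposed in Corollary \ref{thm:OutputOrthogonal} is indeed consistent with the correction terms prescribed by Theorem \ref{thm:Gugercin}, so that the additional terms appearing in the dual of Table \ref{tab:OrthogonalProjection} vanish and no spurious indefinite contribution is added to $A_r+\trans{A_r}$ that could destroy dissipativity. Once this consistency is checked, the proof reduces to citing Corollary \ref{thm:OutputOrthogonal} and the Galerkin preservation of definiteness, exactly mirroring the preceding theorem.
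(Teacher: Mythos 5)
Your proposal is correct and follows essentially the same route as the paper: the corollary is obtained as the dual of the preceding theorem, using the equivalence of the DAE-aware orthogonal projection with the direct projection of the underlying ODE (Corollary~\ref{thm:OutputOrthogonal}, via the dual of Lemma~\ref{lm:SylvesterEquivalence}) together with the fact that a Galerkin projection preserves strict dissipativity in the sense of Definition~\ref{def:SD_DAE}, hence asymptotic stability. Your explicit check that the choice $\Rtan=\trans{\Ltan}$ in the symmetric case makes the correction terms consistent is a detail the paper leaves implicit, but it does not change the argument.
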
		
The effectiveness of this method will be assessed through numerical examples in section \ref{sec:examples}. 
In addition, it will be shown that if the correct choice of Krylov subspace is ignored, the ROM might indeed become unstable. 

\section{Stability preserving, adaptive reduction by $\Htwo$-pseudo-optimality}\label{sec:Stability_Preserving_H2_pseudo}
The preservation of stability in the case of ODEs can be achieved by construction using $\Htwo$-pseudo-optimal reduction as presented in section \ref{sec:H2-opt}. 
In this procedure, if the shifts of the Krylov subspaces are chosen on the right complex half-plane, then the ROM will have all eigenvalues in the left complex half-plane and hence be asymptotically stable.
In the following, we will extend this result to the case of SE-DAEs. Since the choice of shifts becomes twice as important in the pseudo-optimal setting, we will subsequently address the question of how to appropriately select the shifts.

\subsection{$\Htwo$-pseudo-optimal reduction of SE-DAEs} \label{sec:SE-DAE PORK} 
Recall the construction of pseudo-optimal ROMs following the PORK algorithm (cf. algorithm \ref{alg:pork}). 
One peculiar characteristic of PORK is that the reduced matrices $(E_r,A_r,B_r)$---or $(E_r,A_r,C_r)$ in the dual version---are \emph{independent} of the original model and depend only on the interpolation data, i.e. merely on the reduced eigenvalues and tangential directions encoded in the pair ($S_V,\Rtan $) or ($S_W,\Ltan $) respectively. This characteristic makes it particularly suitable for the extension to SE-DAEs, at least in case the reduced model should be an ODE and not a DAE. Note however, that since the DAEs considered are of index $\nu = 1$, their transfer behavior can be modeled entirely by an equivalent ODE, as it can be seen from the transfer function in \eqref{eq:DAE_tf}. Therefore, the restriction to reduced ODE models can be conducted without loss of generality.

It turns out that the PORK algorithm can be adapted to SE-DAEs by combining it with the appropriate correction terms proposed in theorem \ref{thm:Gugercin}. Since the proof is straightforward following what has been said so far, we will limit the exposition to the adapted algorithm.

\addtocounter{algorithm}{+1}
\begin{algorithm}[!ht]\caption{SE-DAE PORK}\label{alg:SE-DAE PORK}
	\begin{algorithmic}[1]
		\Require ($E$, $A$, $B$, $C$, $D$), $(S_V, \Rtan)$
		\Ensure $\mathcal{H}_2$-pseudo-optimal reduced system matrices
		\State {$V \gets \ A V - E V S_V - B \Rtan = 0$}
		\hfill{// Krylov subspace}
		\State {$\inv{P_r} =$ lyap$(-\trans{S_V}, \trans{\Rtan}\Rtan)$}  \hfill{// As in the ODE case}
		\State {$B_r = -P_r \trans{\Rtan}$}
		\State {$A_r = S_V + B_r \Rtan$, $\;E_r = I$} 
		\State {$C_r = CV + \Dr \Rtan$, $\;D_r = D + \Dr$} \hfill{// cf. theorem \ref{thm:Gugercin} and \eqref{eq:P(s)}}
	\end{algorithmic}
\end{algorithm}
Naturally this result extends to the dual case.
This stability preserving reduction procedure clearly is far less restrictive than the one proposed in section \ref{sec:StabPrev1-orthogonal}. On the one hand, it does not require the original model to be given in strictly dissipative form. On the other hand, it poses no restrictions on the structure of the original model and can therefore be applied to any SE-DAEs.

What still remains unclear at this point is how to choose appropriate shifts. Recall that in standard reduction based on Krylov subspaces the choice of shifts only affects the matching frequencies, while the eigenvalues implicitly result from the reduction. By contrast, in the pseudo-optimal setting the reduced eigenvalues are directly chosen through the choice of shifts. While it is known that pseudo-optimality is a necessary condition for $\Htwo$-optimality (cf. Maier-Luenberger conditions \cite{Beattie_2014_Survey}), this does not imply that pseudo-optimal ROMs are better than the equivalent ROM achieved, for instance, through two-sided reduction. 
The choice of shifts becomes even more crucial and the next section will show how these can be determined adaptively even for SE-DAEs within the reduction framework CUREd SPARK.

\subsection{Judicious choice of shifts: CUREd SPARK for SE-DAEs} \label{sec:SE-DAE CUREd SPARK}
Analogously to the ODE case, the cumulative reduction framework with adaptive choice of shifts (CUREd SPARK) can be extended to the case of SE-DAEs to complement the pseudo-optimal reduction with a judicious choice of shifts. With this respect, it is worth separating the discussion for the two complementary classes of SE-DAEs: the ones presenting and the ones omitting the implicit feedthrough term $\Dr$.

Whenever the SE-DAE does not have an implicit feedthrough term $\Dr$ as defined in \eqref{eq:P(s)}, then the original CUREd SPARK introduced in section \ref{sec:H2-opt} can be used without restrictions. In fact, the algorithms do not require the matrix $E$ to be regular. Numerical examples in section \ref{sec:examples} will show the effectiveness of this procedure. It is worth noting at this point that the vast majority of benchmark SE-DAEs models commonly available fall into this category. This is motivated by physical intuition, since real technical systems mostly have some delay between excitation and response and are hence feedthrough-free.

On the other hand, if an implicit feedthrough term $\Dr$ is present, then an adaptation of CUREd SPARK is required. It can be shown that the iteration in CURE \mbox{(cf. \eqref{eq:CURE})} can be adapted to take care of the implicit feedthrough term $\Dr$ resulting at each iteration $k$. However, the greater issue arises when applying SPARK to SE-DAEs and requires a modification of the DAE, as we shall discuss in the following.

As seen in section \ref{sec:H2-opt}, the goal of SPARK is to find a reduced order model $G_r(s)$ of order n=2 that is locally a $\Htwo$-optimal approximation of the original model $G(s)$. Exploiting pseudo-optimality, the error norm is given by (cf. \eqref{eq:H2errorPO})
\begin{equation*}
\norm{G_e(s)}_{\Htwo}^2 \stackrel{\Htwo-po}{=} \norm{G(s)}_{\Htwo}^2 - \norm{G_r(s)}_{\Htwo}^2
\end{equation*}
This was true at least in the ODE case, where it is common practice to leave feedthrough terms $D$ aside from the reduction and integrating them subsequently into the reduced model. However, in the case of DAEs retaining an implicit feedthrough term $\Dr$, this term is hidden inside the matrices $A,B,C$ and cannot be directly removed prior to the reduction. In this case, obviously the above error expression becomes meaningless since the $\Htwo$-norm of a non strictly proper system does not exist.
To better understand this fact, let us separate the strictly proper part $G^{sp}(s)$ of the transfer function $G(s)$ from the implicit feedthrough $\Dr$ and assume that the reduced model $G_r(s)$ is also given as the sum of a strictly proper part $G_r^{sp}(s)$, which is a pseudo-optimal approximant of $G^{sp}(s)$, and the same feedthrough term $\Dr$. The error expression then becomes
\begin{align}
\norm{G_e(s)}_{\Htwo}^2 &\mathrel{\makebox[\widthof{$\overset{\Htwo-po}{=}$}]{=}} \innProd{G(s)-G_r(s)}{G(s)-G_r(s)}_{\Htwo} \nonumber\\
&\mathrel{\makebox[\widthof{$\overset{\Htwo-po}{=}$}]{=}} \innProd{G^{sp}(s)+\Dr - G_r^{sp}(s) - \Dr}{G(s)-G_r(s)}_{\Htwo} \nonumber\\
&\stackrel{\Htwo-po}{=} \norm{G^{sp}(s)}_{\Htwo}^2 - \norm{G^{sp}_r(s)}_{\Htwo}^2 \label{eq:H2-error-DAE}
\end{align}
In this case, the feedthrough term cancels out and by pseudo-optimality we obtain the difference of two well defined norms. Therefore, in order for \eqref{eq:H2-error-DAE} to hold true, we need to: 
\begin{enumerate}[a)]
	\item find an expression for $G^{sp}(s)$, the strictly proper part of $G(s)$,
	\item find a pseudo-optimal approximation $G_r^{sp}(s)$ for $G^{sp}(s)$,
	\item make sure the reduced model $G_r(s)$ retains the feedthrough term $\Dr$, i.e. $G_r(s) = G_r^{sp}(s) + \Dr$.
\end{enumerate}

Whenever the SE-DAE has an implicit feedthrough term $\Dr$, the model reduction procedure has to compute it and take it into consideration during reduction (cf. \mbox{theorem \ref{thm:Gugercin}} or \cite{GugercinStykel_2013}). 
Therefore, its inclusion in the reduced model is easy to implement. Further, provided a strictly proper model $G^{sp}(s)$ is given, all the algorithms discussed so far can be implemented to reduce it. It follows that the only open question at this point remains how to extract a strictly proper representation $G^{sp}(s)$ out of an SE-DAE with implicit feedthrough. One possible solution is presented in the following:

\begin{proposition}
	Consider a SE-DAE as in \eqref{eq:SE} with an implicit feedthrough term $\Dr\ts\neq\ts0$, whose transfer function can be written as the $G(s)\ts=\ts G^{sp}(s)\ts+\ts\Dr$. Then a state-space realization of the strictly proper part $G^{sp}(s)$ is given by
	\begin{equation}
	\begin{aligned}
	\left[
	\begin{array}{cc}
	E_{11} & 0\\
	0 & 0
	\end{array}
	\right]
	\left[\begin{array}{c}
	\dot{x}_1\\
	\dot{x}_2
	\end{array}\right]
	&= 
	\left[
	\begin{array}{cc}
	A_{11} & A_{12}\\
	A_{21} & A_{22}
	\end{array}
	\right]
	\left[\begin{array}{c}
	x_1\\
	x_2
	\end{array}\right]
	+ 
	\left[\begin{array}{c}
	B_{11}- A_{12}\inv{A_{22}}B_{22}\\
	0
	\end{array}\right]
	u \\
	y &= \left[C_{11}, C_{22}\right] 
	\left[\begin{array}{c}
	x_1\\
	x_2
	\end{array}\right]
	\end{aligned}
	\label{eq:SE_sp}
	\end{equation}
	or alternatively 
	
	\begin{equation}
	\begin{aligned}
	\left[
	\begin{array}{cc}
	E_{11} & 0\\
	0 & 0
	\end{array}
	\right]
	\left[\begin{array}{c}
	\dot{x}_1\\
	\dot{x}_2
	\end{array}\right]
	&= 
	\left[
	\begin{array}{cc}
	A_{11} & A_{12}\\
	A_{21} & A_{22}
	\end{array}
	\right]
	\left[\begin{array}{c}
	x_1\\
	x_2
	\end{array}\right]
	+ 
	\left[\begin{array}{c}
	B_{11}\\
	B_{22}
	\end{array}\right]
	u \\
	y &= \left[C_{11} - C_{22}\inv{A_{22}}A_{21}, 0\right] 
	\left[\begin{array}{c}
	x_1\\
	x_2
	\end{array}\right]
	\end{aligned}
	\label{eq:SE_sp2}
	\end{equation}
\end{proposition}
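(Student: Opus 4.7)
The plan is to reduce both claimed realizations to their underlying ODE via the standard Schur-complement elimination used in equation \eqref{eq:underlying ODE}, and verify in each case that this underlying ODE is exactly the quadruple $(E_1,A_1,B_1,C_1)$ whose transfer function is $G^{sp}(s) \defeq C_1 (sE_1 - A_1)^{-1} B_1$, with \emph{no} implicit feedthrough left over.

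First I would record that applying \eqref{eq:underlying ODE} to the original SE-DAE \eqref{eq:SE} yields the underlying ODE with matrices $(E_1,A_1,B_1,C_1)$ and feedthrough $D_1 = D - C_{22}\inv{A_{22}} B_{22} = D + \Dr$, so the transfer function decomposes as $G(s) = G^{sp}(s) + \Dr$ (with $D$ absorbed). This fixes the target realization we want to recover.

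Next I would verify \eqref{eq:SE_sp} by the same elimination. Because the lower block of the new input matrix is zero, the algebraic equation collapses to $x_2 = -\inv{A_{22}} A_{21} x_1$, which is input-free. Substituting this into the dynamical equation gives $E_{11}\dot{x}_1 = (A_{11}-A_{12}\inv{A_{22}}A_{21})x_1 + (B_{11} - A_{12}\inv{A_{22}}B_{22})u = A_1 x_1 + B_1 u$, while the output becomes $y = (C_{11} - C_{22}\inv{A_{22}}A_{21})x_1 = C_1 x_1$, with no $u$-dependent term. Hence the underlying ODE is $(E_1,A_1,B_1,C_1)$ with zero feedthrough, which realizes $G^{sp}(s)$.

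Then I would treat \eqref{eq:SE_sp2} analogously. Here the algebraic relation is the full $x_2 = -\inv{A_{22}}(A_{21} x_1 + B_{22} u)$, so substituting into the first dynamical block again reproduces $E_{11}\dot{x}_1 = A_1 x_1 + B_1 u$. The $u$-dependent piece of $x_2$ no longer reaches the output, because the output matrix has been redefined so that its second block vanishes; thus $y = (C_{11} - C_{22}\inv{A_{22}}A_{21}) x_1 = C_1 x_1$. Again the underlying ODE is $(E_1,A_1,B_1,C_1)$ with zero feedthrough, giving transfer function $G^{sp}(s)$. Nothing here is genuinely hard: the only point requiring care is to make sure that, in each of the two alternative forms, the redefinition of precisely one block (the input block in \eqref{eq:SE_sp}, the output block in \eqref{eq:SE_sp2}) is exactly what is needed to cancel the $-C_{22}\inv{A_{22}}B_{22}$ contribution that would otherwise appear as an implicit feedthrough after Schur-complement reduction.
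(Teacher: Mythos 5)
Your proposal is correct and takes essentially the same route as the paper: the paper's proof likewise consists of computing the underlying ODE of each candidate realization, \eqref{eq:SE_sp} and \eqref{eq:SE_sp2}, via the Schur-complement elimination used in \eqref{eq:underlying ODE}, and observing that both yield $(E_1,A_1,B_1,C_1)$ with vanishing feedthrough. Your write-up simply makes explicit the cancellation of the $-C_{22}\inv{A_{22}}B_{22}$ term and the absorption of $D$, details the paper dismisses as ``straightforward.''
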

\begin{proof}
	The proof is straightforward and follows by computing the underlying ODE for both the system in \eqref{eq:SE_sp} and \eqref{eq:SE_sp2} as it was done in \eqref{eq:underlying ODE}. 
\end{proof}
Note that, if an implicit feedthrough term $\Dr\ts=\ts -C_{22}\inv{A_{22}}B_{22}$ is present in the system, then the reduced model must pertain it in order to have a bounded approximation error. This implies the fact that the term $\Dr$ must be computed anyways. Therefore, the vectors $\inv{A_{22}}B_{22}$ or $C_{22}\inv{A_{22}}$ required for computing the strictly proper realization $G^{sp}(s)$ are already available without further computations.

Exploiting this result, it is possible to choose reduced order and interpolation points adaptively by means of CUREd SPARK even for SE-DAEs with implicit feedthrough term $\Dr$. We will illustrate the effectiveness of this approach through numerical examples in the following section.

\section{Validation through numerical examples}\label{sec:examples}
The theoretical results presented in the previous sections are validated through some numerical examples. First, we introduce the models used for the computations, then we discuss the results.
\subsection{Transmission line model}
A transmission line, i.e. the transmission of electrical signals through a one-dimensional conductor, is described by a partial differential equation (PDE) in both time and space. A common way of approximating this PDE is by using basic elements of an electric network distributed along the line, discretizing the PDE in space \cite{Reeve_1995}. A typical, simplified\footnote{The simplification results form the omission of the conductance $G_i'$ that is usually shunt in parallel to the conductances in such a network and is justified by the selection of parameters (cf. \cite[p.588]{Reeve_1995})}  representation of such a discretized model is given in figure \ref{fig:TransmissionLine}.

\begin{figure}[h]
	\centering
	\includegraphics[height=3.5cm]{./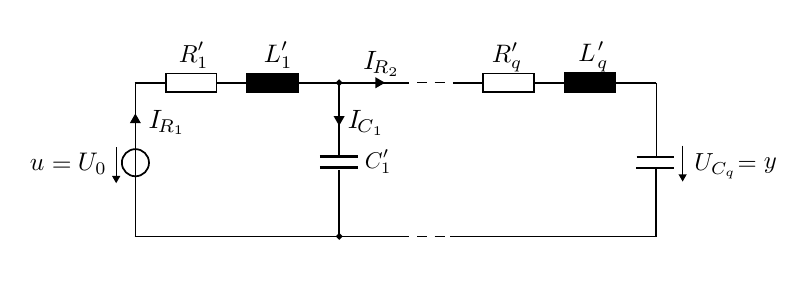}
	\caption{Electrical circuit of a transmission line approximation}
	\label{fig:TransmissionLine}
\end{figure}
The input of the system is represented by a voltage source $U_0$. $R'_i, L'_i, C'_i$ represent the distributed resistance, inductance and capacitance of the line respectively. The typical output of the system is the voltage $U_{C_q}$ over the capacitor  at the end of the line.
The number of loops used to discretize the transmission line shall be denoted by $q$. In general, the higher the number of loops, the better the approximation.
The values for the distributed parameters are taken from \cite[p.588]{Reeve_1995}. In general, the parameters vary largely depending on the geometry and materials of the problems. For our simulations, the special case of a telephone line and a transmission frequency of 1 Hz was taken, for which the parameters are given in table \ref{tab:TransmissionLine}.
\begin{table}[h]
	\centering
	\begin{tabular}{ll}
		\toprule
		Parameter	& Value \\ 
		\midrule
		$R'_i$	& \unitfrac[$172.24 \cdot 10^{-3}$]{$\Omega$}{m}  \\ 
		$L'_i$	& \unitfrac[$0.61 \cdot 10^{-6}$]{H}{m} \\ 
		$C'_i$	& \unitfrac[$51.57 \cdot 10^{-12}$]{F}{m}\\ 
		\bottomrule
	\end{tabular} 
	\caption{Transmission line parameters used for simulations}
	\label{tab:TransmissionLine}
\end{table}

The modeling is conducted using first principles, such as Kirchhoff's laws and constitutive equations for the elements (cf. e.g. \cite{Hambley_2005}). Each loop is modeled using the variables $x_i = \trans{\left[I_{R_i},U_{C_i},U_{R_i},I_{C_i}, U_{L_i}\right]}$, which can be stacked up to yield the $5q$-dimensional state vector that characterizes the DAE. The general structure of the SE-DAE matrices resulting from this modelling can be found in \ref{appx:TL}. 
Note that for the standard choice of input and output as in figure \ref{fig:TransmissionLine} the system matrices satisfy $B_{22}\ts\neq\ts0$ (the input $U_0$ enters the equations through Kirchhoff's law, an algebraic constraint), $C_{22}\ts=\ts0$ (the output is a dynamic variable) and $A_{22}\ts\neq\ts\trans{A_{22}}$.
Also note that even though the modeling does not yield directly a strictly dissipative representation, the procedure described in theorem \ref{thm:LMI} was used to find a suitable representation up until an order of $N$=700, which corresponds to $q$=140 loops. 
Finally, note that by appropriately selecting the output of the system, e.g. by choosing the voltage over the first inductance $L'_1$, it is also possible to model a system with implicit feedthrough $\Dr$. 

\subsection{Power system benchmark models}\label{sec:powersystems}
Another set of benchmark systems chosen to test the validity of the proposed algorithms is given by the power system examples created at the Brazilian Electrical Energy Research Center (CEPEL) and available online in the MOR Wiki \cite{MORWiki_power}.
These systems represent large power systems (including lines, buses, power plants etc.) linearized about an operating point and are used to simulate and study the oscillations of complex power systems. The reduction of such systems is relevant for the numerical simulations required, for instance, for small-signal stability analysis, controller design, and real-time investigations of the transient behavior. For more informations on the origin of the systems, please refer to \cite{Rommes_2006, MORWiki_power}.

Most of these systems are index 1 DAEs that are either already in semi-explicit form or can be transformed to it by simple reordering of rows and columns.		
The models are mainly strictly proper, meaning that they have neither explicit feedthrough $D$ nor implicit feedthrough $\Dr$. However, the so called ``MIMO46'' system has SISO transfer functions on the diagonal of the transfer function matrix that present such an implicit feedthrough $\Dr\ts\neq\ts0$.

For the numerical investigations of this treatise, two different models of the Brazilian Interconnected Power System of 1997 (BIPS/97) were used. Both of them have a state-space dimension of roughly $N$=13250 and a dynamic order, that is the order of the underlying ODE, of $n_{dyn}$=1664. While the first one is SISO and strictly proper ($ww\_vref\_6405.mat$), the second one is MIMO with implicit feedthrough on the diagonal channels ($mimo46x46\_system.mat$).

\subsection{Results}
\subsubsection{The importance of SE-DAE-awareness in orthogonal reduction} 
The first example is aimed at showing what can happen if the results of section \ref{sec:OrthogonalProjectionOfDAEs} are neglected. Figure \ref{fig:DAE-aware-MOR} shows the results obtained on a transmission line model with $q\ts=\ts10$ loops. 

\begin{figure}[h!]
	\centering
	\includegraphics[width = \mywidth]{./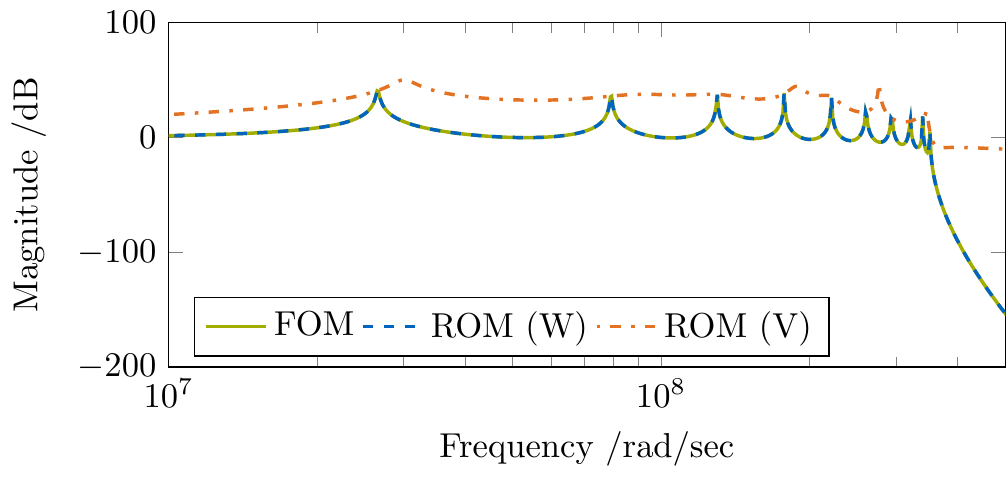}
	\caption{Orthogonal projection using input and output Krylov subspaces for the transmission line example ($q=10$)}
	\label{fig:DAE-aware-MOR}
\end{figure}
The bode plot of the FOM with $N$=50 is compared to two ROMs of order $n$=20, one obtained through V-based and the other through W-based orthogonal projection of the SE-DAE. Recall that since $C_{22}\ts=\ts0$, $B_{22}\ts\neq\ts 0$ and $A_{22}\neq\trans{A_{22}}$, only W-based orthogonal projection is guaranteed to effectively reduce the underlying ODE. The shifts for the Krylov subspaces were chosen along the imaginary axis with frequencies corresponding to the peaks of the FOM. As it can be seen, the W-based ROM matches the FOM perfectly. In fact, 20 corresponds to the dynamic order of the system, i.e. the order of the underlying ODE. Therefore, the W-based ROM is merely a transformation of the underlying ODE and hence shares the same transfer function. On the other hand, the figure shows how this is not true for the V-based ROM. Moment matching at the shifts still holds, however, the reduction results are clearly worse. In particular, this is not a transformation of the underlying ODE anymore.

\subsubsection{Stability preserving reduction of strictly dissipative SE-DAEs}
The effectiveness in preserving stability while interpolating the underlying ODE of the reduction method proposed in theorem \ref{thm:orthogonalReduction4Stability} is shown on a transmission line model with $q$=140 loops and state-space dimension $N$=700. Before reduction, the SE-DAE has been transformed to a strictly dissipative formulation as explained in theorem \ref{thm:LMI}. The results are given in figure \ref{fig:TL-MOR}. The FOM is compared with two ROMs of order $n$=100, one obtained with V-based, the other with W-based orthogonal projection about the origin. Recall that this system satisfies $C_{22}=0$, hence only $W$-based orthogonal projection is expected to yield acceptable results.
\begin{figure}[h!]
	\centering
	\includegraphics[width = \mywidth]{./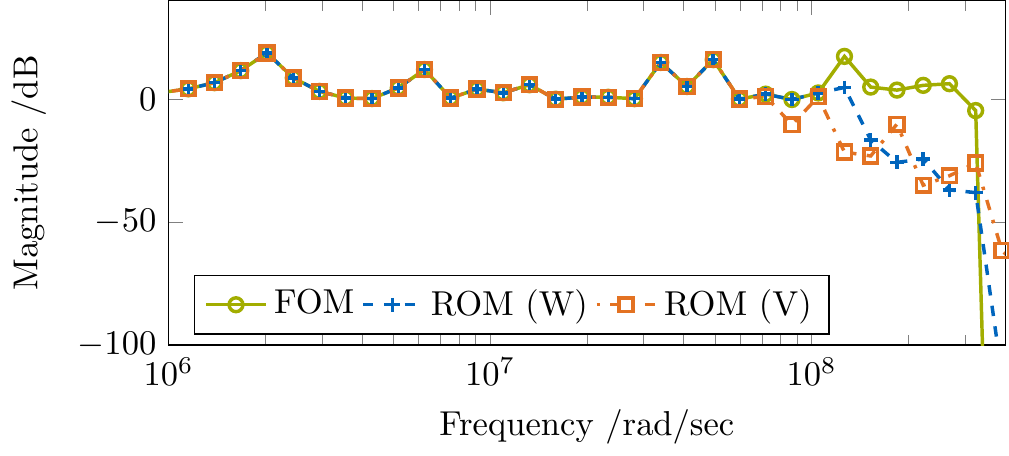}
	\caption{Orthogonal reduction using input and output Krylov subspaces for the transmission line example ($q=140$)}
	\label{fig:TL-MOR}
\end{figure}

The plot suggests that the W-based ROM yields a slightly better approximation of the FOM at higher frequency range. The plot does not show that while the W-based ROM coincides with an equivalent ROM obtained reducing directly the underlying ODE, the V-based one does not, emphasizing again the importance of SE-DAE-aware reduction. Even more importantly: while the W-based ROM preserves strict dissipativity and stability, the V-based ROM is unstable! These results are summarized in table \ref{tab:TL-MOR}.
\begin{table}[h]
	\centering
	\begin{tabular}{c|ccc}
		& FOM	& ROM (W) & ROM (V) \\ 
		\midrule
		dissipative & \Checkmark 	&\Checkmark & \ding{53}  \\ 
		stable	& \Checkmark & \Checkmark & \ding{53}\\ \addlinespace[2mm]
	\end{tabular} 
	\caption{Comparison of different orthogonal projections of the transmission line}
	\label{tab:TL-MOR}
\end{table}

\subsubsection{Stability preserving, $\Htwo$-pseudo-optimal reduction of general SE-DAE}
The stability preserving, adaptive reduction of the SE-DAE with CUREd SPARK is possible even for systems that are not in strictly dissipative form. As discussed in section \ref{sec:Stability_Preserving_H2_pseudo}, whenever the system has no implicit feedthrough $\Dr$, CUREd SPARK can be applied by simply replacing the PORK algorithm for $\Htwo$-pseudo-optimal reduction by the SE-DAE PORK (algorithm \ref{alg:SE-DAE PORK}).		 
The reduction results using this strategy for the strictly proper power system described in \ref{sec:powersystems} are shown in figure \ref{fig:BIPS/97}, where the magnitude plot of the FOM (in dB) is compared to the magnitudes of the error systems resulting from a $W$ or $V$-based reduction through CUREd SPARK respectively.

\begin{figure}[h!]
	\centering
	\includegraphics[width = \mywidth]{./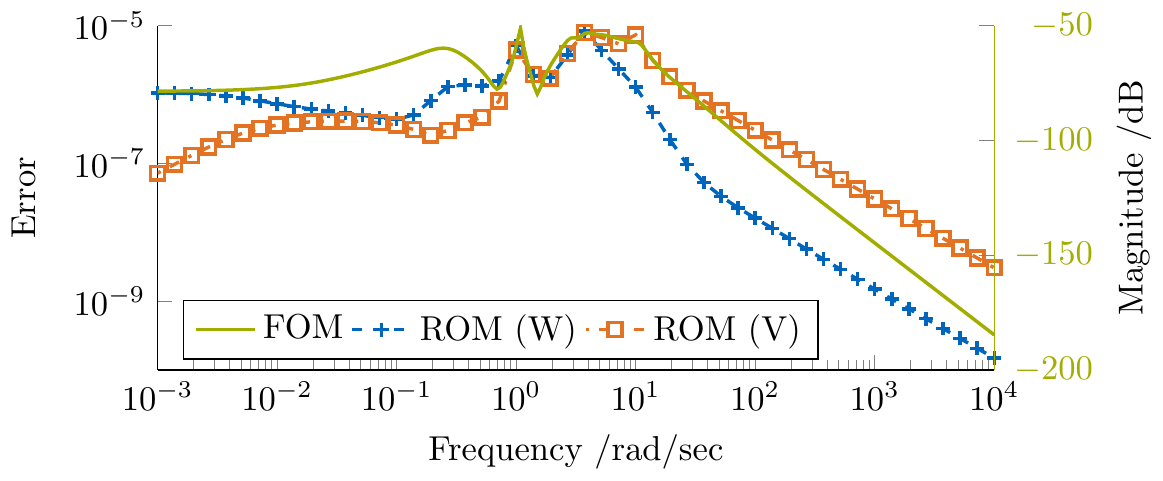}
	\caption{CUREd SPARK reduction of the BIPS/97 system ($\Dr=0$, $n=50$)}
	\label{fig:BIPS/97}
\end{figure}
The original order of the DAE is $N$=13251, the dynamic order is $n_{dyn}=1664$. The reduction was conducted using CUREd SPARK to a reduced order of n=50, both with $V$ and $W$-based factorization. The selection of the shifts was conducted automatically in each iteration of CURE by the greedy, $\Htwo$-optimal algorithm in SPARK. As it can be seen from the magnitude of the frequency response, both reduced models achieve a satisfactory approximation of the FOM while preserving stability.

The results of the reduction of SE-DAEs presenting an implicit feedthrough are shown in figure \ref{fig:BIPS/97_MIMO1} and figure \ref{fig:BIPS/97_MIMO2}. The FOM represents one of the diagonal input-output channels of the MIMO power system described in section \ref{sec:powersystems}. Like the previous system, the original order is $N$=13250 and the dynamic order is $n_{dyn}$=1664. The reduced order was chosen to $n$=50 for all channels considered. Also in this case, the choice of shifts was conducted automatically at each iteration of CURE by the greedy algorithm in SPARK.

\begin{figure}[h!]
	\centering
	\includegraphics[width = \mywidth]{./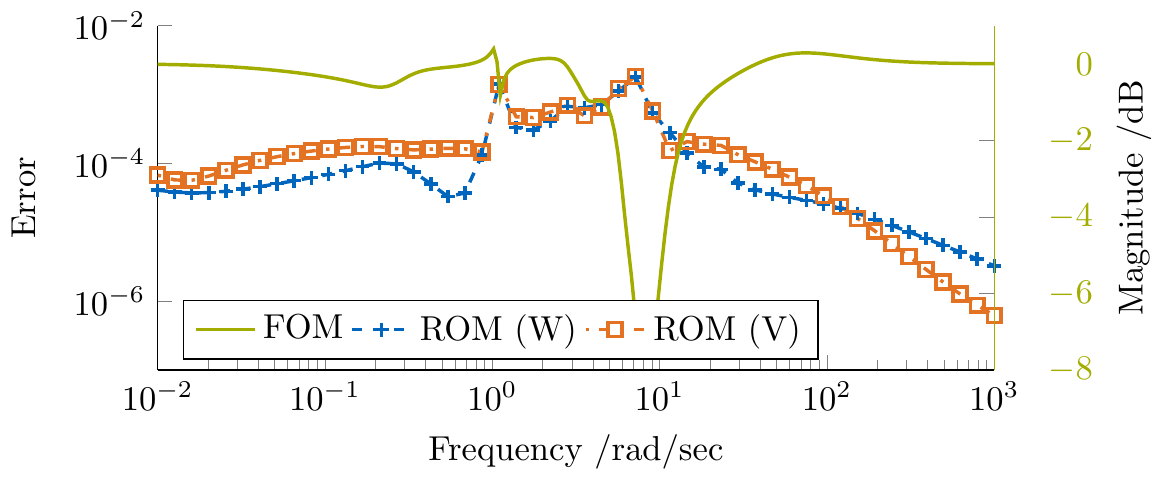}	
	\caption{CUREd SPARK reduction of the MIMO BIPS/97, channel 25 ($\Dr\neq 0, n=50$)}
	\label{fig:BIPS/97_MIMO1}
\end{figure}
\begin{figure}[h!]
	\centering
	\includegraphics[width = \mywidth]{./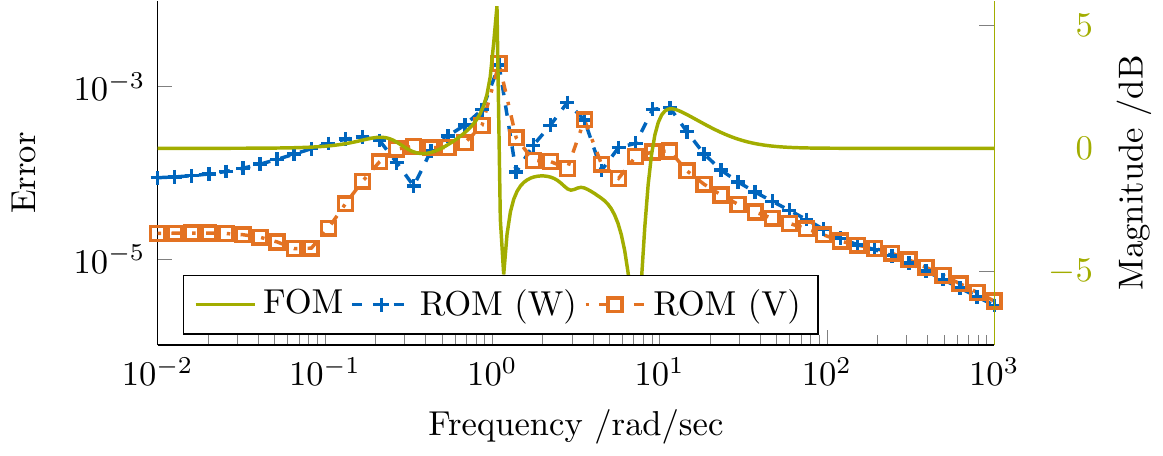}	
	\caption{CUREd SPARK reduction of the MIMO BIPS/97, channel 42 ($\Dr\neq 0, n=50$)}
	\label{fig:BIPS/97_MIMO2}
\end{figure}
The figure shows how the proposed reduction algorithm manages to detect the relevant dynamics of the systems and preserve both stability and the implicit feedthrough.	

\section{Conclusions}
In this contribution the stability preserving reduction of differential-algebraic equations (DAE) of index 1 in semi-explicit form has been discussed. To the authors' knowledge, this is the first work to address this issue. To this purpose, two different methods have been presented. 

The first is aimed at preserving the strictly dissipative form of the underlying ordinary differential equation (ODE) by means of orthogonal projection of the DAE. A transformation of the DAE to strictly dissipative form through the solution of a convex feasibility problem has been introduced. It has been shown in theory and through numerical examples that the correct orthogonal projection can be achieved only if the DAE fulfills certain conditions. In this case, the appropriate choice of input or output Krylov subspace for the orthogonal projection yields a strictly dissipative, hence stable reduced order model.

The second method uses $\Htwo$-pseudo-optimal reduction and is more general in that it does not impose any conditions on the DAE. For this purpose, the Pseudo-Optimal Rational Krylov (PORK) algorithm was adapted to include this class of DAEs. By construction, $\Htwo$-pseudo-optimal reduction generates asymptotically stable reduced order models if the shifts of the Krylov subspaces are chosen on the right complex half-plane. However, in this setting the choice of appropriate shifts is even more important for a good approximation quality. If the DAE has no implicit feedthrough term, which is often the case for technical systems, then this adapted PORK algorithm can be used within CUREd SPARK to adaptively select reduced order and shifts. If the system at hand does possess an implicit feedthrough term, then a realization of the strictly proper part was presented. This strictly proper part can be reduced adaptively and stability preserving with CUREd SPARK and the complete feedthrough term is added at the end.

The proposed methods have been tested numerically showing their validity on different academic and real-life systems.


\appendix
\section{Proof of proposition \ref{PROP:REINSCH}}\label{appx:Reinsch}
\begin{proof}
	The proof of proposition \ref{PROP:REINSCH} amounts to showing that the Schur complement of a strictly dissipative matrix, i.e. satisfying $A\ts+\ts\trans{A}\ts\nd\ts0$, is strictly dissipative itself. For this purpose, we shall partition the matrix $A$ according to \eqref{eq:SE}
	\[
	A = \left[
	\begin{array}{cc}
	A_{11} & A_{12}\\
	A_{21} & A_{22}
	\end{array}
	\right]
	\]
	and define the Schur complement  $A_1 \defeq A_{11} - A_{12}\inv{A_{22}}A_{21}$.
	It then follows 
	\[ A + \trans{A} \nd 0 \iff \invt{A}\left(A + \trans{A}\right)\inv{A} = \inv{A} + \invt{A} \nd 0 \]
	where the invertibility of A follows from its strict dissipativity.
	The inverse of A can be build block-wise as follows ($*$ denotes entries that are not required for our reasoning)
	\begin{equation}
	\inv{A} = \left[
	\begin{array}{cc}
	\inv{A_1} & * \\
	* & *
	\end{array}
	\right]
	\end{equation}
	and the existence of $\inv{A_1}$ is guaranteed by the existence of $\inv{A}$. It follows
	\begin{equation}
	\begin{aligned}
	& \inv{A} + \invt{A} =  \left[
	\begin{array}{cc}
	\inv{A_1} + \invt{A_1} & * \\
	* & *
	\end{array}
	\right] \nd 0\\
	\Longrightarrow&\inv{A_1} + \invt{A_1} \nd 0\\
	\iff& A_1\left(\inv{A_1} + \invt{A_1}\right)\trans{A_1} = A_1 + \trans{A_1} \nd 0 \\
	\end{aligned}
	\end{equation}
\end{proof}

\section{Transmission line SE-DAE}\label{appx:TL}
In the following we introduce the model of the transmission line presented in figure \ref{fig:TransmissionLine}. Since the aim is to analyze the characteristics of the DAE-aware procedures presented in this work, the model has been kept as simple as possible. The conductance $G_i$ that is typically in parallel to the capacitance $C_i$ has been neglected. This case corresponds e.g. to the transmission line model of a 24 gauge telephone PIC cable at 21$^\circ C$ and a 1 Hz frequency \cite{Reeve_1995}. The equations have been derived using first principles of physics due to their simplicity and direct physical interpretation \cite{Hambley_2005}. Each loop $i$ out of $q$ has a set of 5 states, namely $x_i = \trans{\left[I_{R_i},U_{C_i},U_{R_i},I_{C_i}, U_{L_i}\right]}$, where we have already made use of the fact that the current flowing through the resistances and inductances in each loop must be identical.
The five equations that model each loop and define the relations amongst the states are Kirchhoff's laws and the constitutive equations of each element (resistor, inductor, capacitor). For each loop $i=2,\dots,q$  it therefore holds
\begin{subequations}
	\begin{align}
	U_{R_i} + U_{L_i} + U_{C_i} - U_{C_{i-1}} &= 0 \label{eq:Kirchhoff_loop}\\ 
	I_{R_{i}} - I_{R_{i+1}} - I_{C_i} &=0\\
	U_{R_i} - R'_i I_{R_i} &=0\\
	U_{L_i} - L'_i\diff{I_{R_i}}{t} &=0\\
	I_{C_i} - C'_i\diff{U_{C_i}}{t}&=0
	\end{align}
\end{subequations}
For $i=1$, equation \eqref{eq:Kirchhoff_loop} is changed replacing $U_{C_{i-1}}$ by the input $U_0$.
This formalism leads to the SE-DAE defined block-wise as
\begin{equation}
\begin{aligned}
&\begin{pmat}[{.|}]
L_q & 0_q & 0_{q\times 3q} \cr
0_q & C_q & 0_{q\times 3q} \cr \-
0_{3q\times q} & 0_{3q\times q} & 0_{3q} \cr
\end{pmat}
\begin{pmat}[{}]
\dot{I}_{R} \cr
\dot{U}_C \cr \-
\dot{U}_R \cr 
\dot{I}_C \cr 
\dot{U}_L \cr
\end{pmat}=\\ 
&=\begin{pmat}[{.|..}]
0_q & 0_q & 0_q & 0_q & I_q \cr 
0_q & 0_q & 0_q & I_q & 0_q \cr \-
-R_q & 0_q & I_q & 0_q & 0_q \cr 
I_q - I^+_q & 0_q & 0_q & -I_q& 0_q \cr 
0_q & I_q - I^-_q & I_q & 0_q & I_q \cr
\end{pmat}
\begin{pmat}[{}]
I_{R} \cr
U_C \cr \-
U_R \cr 
I_C \cr 
U_L \cr
\end{pmat}
- 
\begin{pmat}[{}]
0_{q\times 1} \cr
0_{q\times 1} \cr \-
0_{q\times 1} \cr  
0_{q\times 1} \cr
\delta_B \cr
\end{pmat}
U_0\\
y &= \begin{pmat}[{.|..}] 0_{1\times q} & \delta_C & 0_{1\times q} & 0_{1\times q} & 0_{1\times q} \cr \end{pmat} x 
\end{aligned}
\label{eq:TL-Blockwise}
\end{equation}
where $I_q$ is the identity matrix of size $q$, $0_{m,n}$ is a zero matrix of size $m\ts\times\ts n$, $I_q^+$ and  $I_q^-$  are squared matrices having ones on the first super- and sub-diagonal respectively and the other entries are defined as follows
\begin{equation*}
	\begin{aligned}
	L_q &\defeq \diag{L'_1,\dots,L'_q}\\
	I_{R} &\defeq \trans{\left[I_{R_1},\dots,I_{R_q}\right]}\\
	\delta_B &\defeq \trans{\left[1, 0, \dots 0\right]}\\
	\delta_C &\defeq \left[0, \dots, 0, 1\right].
	\end{aligned}
\end{equation*}

Note that other techniques like Modified Nodal Analysis (MNA) are widely used to model such networks \cite{Kunkel_2006}. However, they do not directly yield a SE-DAE as in \eqref{eq:SE} and were therefore deemed as less suitable for the purpose of this treatise. It is however possible to transform the resulting equations from said modeling formalisms into a SE-DAE, as the underlying system stays the same.



\section*{Acknowledgements}
\noindent
Ms. Tatjana Stykel is sincerely thanked for the fruitful discussions on differential-algebraic equations.

\bibliographystyle{elsarticle-num} 
\bibliography{Castagnotto_DAE_arXiv}
\end{document}